\newtheorem{defn}{Definition}[section]
\newtheorem{prop}[defn]{Proposition}
\newtheorem{lem}[defn]{Lemma}
\newtheorem{thm}[defn]{Theorem}
\newtheorem{cor}[defn]{Corollary}
\newcommand {\ZZ}{{\mathds Z}}
\newcommand {\C}{{\mathds C}}
\newcommand {\Z}{{\mathcal Z}}
\newcommand {\CC}{{\mathcal C}}
\newcommand {\Q}{{\mathds Q}}
\newcommand {\QQ}{{\mathcal Q}}
\newcommand {\M}{{\mathcal M}}
\newcommand {\CP}{{\mathds P}}
\newcommand {\DD}{{\mathcal D}}
\newcommand {\TCC}{\tilde{\mathcal C}}
\newcommand {\TK}{{\tilde{K}}}
\newcommand {\TD}{{\tilde{D}}}
\newcommand {\tZ}{{\mathcal Z}}
\newcommand {\TDD}{{\tilde{\DD}}}
\newcommand {\TRR}{{\tilde{R}}}
\def\End{\operatorname{End}}
\def\div{\operatorname{div}}
\def\Im{\operatorname{Im}}
\def\ord{\operatorname{ord}}
\def\Hom{\operatorname{Hom}}
\def\Ext{\operatorname{Ext}}
\title{Old and new motivic cycles on Abelian surfaces}
\author{Ramesh Sreekantan}
\begin{document}
	
	\maketitle
	\baselineskip=17pt
	\begin{abstract}
		Collino \cite{colo} discovered indecomposable motivic cycles in the group $H^{2g-1}_{\M}(J(C),\ZZ(g))$. In \cite{sree22-1} we described the construction of some new motivic cycles which can be viewed as a generalization of Collino's cycle when $g=2$. In this paper we show that our new cycles are in fact related to Collino's cycles of higher genus. On one hand this suggests that new cycles are hard to find. On the other, it suggests that the tools developed to study Collino's cycle can be applied to our cycles. 
	\end{abstract}

\section{Introduction}

\subsection{Algebraic Cycles}

Let $(C,P$) be a pointed curve of genus $g\geq 3$ which is not hyperelliptic. A very important cycle, at least from the point of view of algebraic cycles, is the {\bf Ceresa cycle} in the Jacobian of {\bf non-hyperelliptic  curves} of genus $g>3$ given by 
$$Z_P=C_P-(-1)^*(C_P)$$
where $C_P$ is the image of $C$ under the map $\iota_P(x)=x-P$. This is a null homologous cycle lying in the group $CH^{g-1}_{hom}(J(C))$. In general it is {\bf not} algebraically equivalent to $0$ and is hence a non-trivial element of the Griffiths group of $J(C)$ \cite{cere}. 

Collino \cite{coll} discovered a higher Chow cycle $Z_{P_1,P_2,\tilde{R}}$ in  the group $CH^{g}(J(C),1)$ where $J(C)$ is  the Jacobian of a {\bf hyperelliptic curve}. This  can be viewed as a degeneration of the Ceresa cycle and depends on the curve  $C$ along with a function with divisor supported on two Weierstrass points $P_1$ and $P_2$. To determine the function precisely we have to choose a third point $\tilde{R}$ where the function takes the value $1$ for which it is convenient to choose another Weierstrass point.  Analogous to Ceresa's result,  it provides an example of a generically {\bf indecomposable cycle}, which roughly means that is not the product of cycles in other Chow groups. 
	
In the case of genus $2$, when all curves are hyperelliptic, Collino's element is defined in fibres over the complement of the moduli of products of elliptic curves in the moduli of Abelian surfaces.  In \cite{sree22-1} we generalised Collino's cycles to cycles defined in the fibres over the complement of certain {\bf Noether-Lefschetz divisors},  which are surfaces on the modular threefold  where the Neron-Severi group of the corresponding Abelian surface has an element which is not a multiple of the class of principal polarization. These divisors can also be understood as components of the image  of the  moduli of Abelian surfaces with real multiplication or moduli of products of elliptic curves.  These are  also called  {\bf Humbert surfaces} or {\bf Heegner divisors}. Collino's cycle is defined in the complement of the Humbert surface of invariant $1$.

There is a classical result of Kummer which gives a bijective correspondence between Abelian surfaces on the one hand and configurations of six  lines in $\CP^2$ tangent to a conic on the other. Humbert, and more recently Birkenhake-Wilhelm, related the existence of rational curves which have exceptional intersection with this configuration of lines to the corresponding Abelian surface having exceptional divisors, or equivalently, the moduli point lying on a Noether-Lefschetz divisor. 

In this paper we recall the construction of the new  motivic  cycles which are defined in the complement of such Noether-Lefschetz divisors. The main theorem states that in fact these cycles are related to Collino's cycles in higher genus hyperelliptic curves.  

\begin{thm} Let $\QQ_0$ be a rational curve in $\CP^2$ which determines a certain Noether-Lefschetz divisor as in Secton \ref{mainconstruction}. Let $A$ be an Abelian surface whose moduli point does not lie on that divisor.  Let $\Z_{\QQ_0,P,R}$ be the cycle in $H^3_{\M}(A,\Q(2))$ determined by $\QQ_0$, a node $P$  and an auxilliary point $R$ as in Theorem \ref{construction}. Then there is a hyperellipic curve $\TD$ mapping to a curve $D$ on $A$ and points $P_1,P_2$ and $\tilde{R}$ on $\TD$ such that under the induced map from the Jacobian 
$\pi:J(\TD)\rightarrow A$
	$$\pi_*(Z_{P_1,P_2,\tilde{R}})=2 \Z_{\QQ_0,P,R}$$
	where $Z_{P_1,P_2,\tilde{R}}$ is the cycle constructed by Collino in $CH^{g}(J(\TD),1)=H^{2g-1}_{\M}(J(\TD),\ZZ(g))$  as in section \ref{collino}.
	\end{thm}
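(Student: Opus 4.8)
The plan is to realise both cycles inside the single geometric picture attached to $A$ --- the Kummer configuration of six lines tangent to a conic in $\CP^2$ --- and then to compute the pushforward $\pi_*(Z_{P_1,P_2,\tilde R})$ directly as a higher Chow cycle and match it, term by term, with $2\,\Z_{\QQ_0,P,R}$. Since $\dim J(\TD)-\dim A=g-2$, the map $\pi_*$ carries $CH^g(J(\TD),1)$ to $CH^2(A,1)=H^3_\M(A,\Q(2))$, and on a cycle $(W,f)$ whose support $W$ maps generically finitely to its image it is given by $(W,f)\mapsto(\pi(W),\mathrm{Nm}(f))$, the norm being taken along the induced map of curves. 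The whole proof therefore reduces to identifying the images in $A$ of the support curves of Collino's cycle together with a computation of these norms.

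First I would construct $\TD$ and the homomorphism $\pi$. The rational curve $\QQ_0$ meets the six lines of the Kummer configuration in a prescribed set of points, and the double cover of $\QQ_0\cong\CP^1$ branched at these points is a hyperelliptic curve $\TD$ whose Weierstrass points are exactly the preimages of $\QQ_0\cap(\text{six lines})$. The node $P$ of $\QQ_0$ and the auxiliary point $R$ single out the two Weierstrass points $P_1,P_2$ and the normalising point $\tilde R$ of Collino's construction in Section \ref{collino}. The covering $\TD\to\QQ_0$ lies over the double cover $A\to\mathrm{Km}(A)$, so $\TD$ maps to a curve $D\subset A$, and by Albanese functoriality --- using that $A$ is its own Albanese --- this induces the homomorphism $\pi:J(\TD)\to A$. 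I would check that, under the Abel--Jacobi embedding $\iota_{P_1}:x\mapsto[x-P_1]$, the curve $\TD$ maps isomorphically to its image $C_{P_1}\subset J(\TD)$ and that $\pi(C_{P_1})=D$.

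Next comes the pushforward. Collino's cycle is supported on two Abel--Jacobi images $W=\iota_{P_1}(\TD)$ and $W'=\iota_{P_2}(\TD)$, which differ by translation by the class $[P_1-P_2]$; this class is $2$-torsion because $2(P_1)\sim 2(P_2)\sim g^1_2$, and the pair carries functions $f,f'$ whose divisors, supported on the Weierstrass translates, cancel. The key geometric input is that $\pi$ kills exactly this translation ambiguity --- the $2$-torsion point $\pi([P_1-P_2])$ stabilises the symmetric curve $D$ --- so that both $W$ and $W'$ map to the same curve $D$ in $A$. Consequently $\pi_*(Z_{P_1,P_2,\tilde R})=(D,\mathrm{Nm}(f)\cdot\mathrm{Nm}(f'))$, and by the symmetry exchanging the two translates the two norms agree, so the resulting function is a square and $(D,\phi^2)=2\,(D,\phi)$ in the additive group $CH^2(A,1)$. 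Matching $(D,\phi)$ with the cycle of Theorem \ref{construction} then yields the asserted identity, the factor $2$ being exactly this collapse of the two translates onto $D$.

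The main obstacle is the precise comparison of functions rather than the geometry of supports. One must verify that the single function $\phi$ produced on $D$ by the norm computation is literally the function defining $\Z_{\QQ_0,P,R}$ --- matching its divisor at the nodal and Weierstrass images, and its normalisation at $R$ versus $\tilde R$ --- and that the pushforward support contains no spurious extra components requiring a decomposable correction. Confirming that $\pi([P_1-P_2])$ genuinely stabilises $D$, so that the two translates collapse and produce the factor $2$ rather than some other integer, is the delicate point; throughout, comparing the Beilinson regulators of both sides in Deligne cohomology gives a robust consistency check that should pin the constant down.
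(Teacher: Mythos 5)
Your overall strategy is the right one, and it is in fact the paper's: Collino's cycle is supported on the two Abel--Jacobi translates $C_{P_1}$, $C_{P_2}$, both translates collapse onto the same curve in $A$, and this collapse is exactly the factor $2$. But there are two genuine gaps. First, your construction of the hyperelliptic curve is wrong. You take $\TD$ to be the double cover of $\QQ_0\cong\CP^1$ branched at $\QQ_0\cap S_A$; the correct curve is the normalization $\TDD$ of $\DD=\phi^*(\psi^*(\QQ_0))\subset A$, which is a double cover of the normalization $\TCC$ of the nodal rational curve $\CC=\psi^*(\QQ_0)\subset K_A$ --- hence of degree $4$, not $2$, over $\QQ_0$. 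This is not a cosmetic point: in your curve the point lying over $P$ is a single branch point, so the ``two Weierstrass points $P_1,P_2$'' over $P$ that your argument needs do not exist in your setup. In the paper, $P_1,P_2$ are the two preimages of the node $P$ of $\DD$ under the normalization map, and the fact that they are Weierstrass points is not automatic: it requires the lemma that the involution induced by $\phi:A\to K_A$ fixes the exceptional fibre over $P$ pointwise, hence fixes $P_1$ and $P_2$, making them ramification points of $\TDD\to\TCC$. Without this, Collino's construction (which needs a function with divisor $2P_1-2P_2$) cannot even be invoked.

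Second, the step you yourself flag as ``the delicate point'' --- that $\pi$ collapses the two translates onto the same curve with the same function --- is exactly where the proof has its content, and you leave it unproven; a comparison of Beilinson regulators can at best be a consistency check and cannot establish the identity in the motivic group. The paper closes this in one stroke using the universal property of the Jacobian (Corollary \ref{univjaccor}): since $P_1$ and $P_2$ are the two preimages of the node, $\mu(P_1)=\mu(P_2)=P$, so the induced morphism $\tilde{\mu}:J(\TDD)\to A$ satisfies $\tilde{\mu}\circ\iota_{P_1}=\mu=\tilde{\mu}\circ\iota_{P_2}$. Consequently $\tilde{\mu}$ restricted to either $C_{P_i}$ is just $\mu$ transported by the embedding $\iota_{P_i}$, so both components $(C_{P_1},f_{P_1})$ and $(C_{P_2},f_{P_2})$ push forward to the identical cycle $(\DD,h_P)=\Xi_{P,R}$, and the factor $2$ is immediate; there is no ambiguity about ``some other integer.'' Note also that no norm is needed: $\pi|_{C_{P_i}}$ is birational onto $\DD$ (it is the normalization map), and the functions match because Collino's $f$ (divisor $2P_1-2P_2$, value $1$ at $R_1$) is precisely the pullback of the function $h_P$ defining $\Xi_{P,R}$ along the double cover, the doubling of the divisor coming from the ramification at $P_1,P_2$. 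Your norm-and-square bookkeeping reaches the same conclusion, but only after the support identification that you have not supplied.
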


On one hand, the theorem  shows that perhaps there are not too many ways to construct algebraic cycles. Two seemingly unrelated constructions lead to the essentially the same cycle. On the other hand it allows one to relate the new cycle to certain extensions of the mixed Hodge structure on the fundamental group of a curve. It is known that Abel Jacobi image of the Ceresa cycle \cite{hain,pult,kaen,rabi} and the regulator of the  Collino cycle \cite{colo,sasr} are related to explicit extensions of the mixed Hodge structure on the fundamental group of the curve. So one can similarly relate the regulator of the new cycles to  extensions of the the fundamental group.

\section{Motivic Cycles}

In this section we introduce the background on motivic cycles and Collino's cycle. \begin{comment} as well as the work of Colombo. We have followed \cite{sasr}. 
\end{comment}

\subsection{Motivic Cohomology Cycles}
\label{motiviccycles}

Let $X$ be a smooth projective algebraic variety of dimension $g$  defined over $\C$. The motivic cohomology group $H^{2g-1}_{\M}(X,\ZZ(g))$  has the following presentation: Generators  are represented by finite sums 
$$Z=\sum_i (C_i,f_i)$$
where $C_i$ are curves  on $X$ and $f_i:C_i \longrightarrow \CP^1$ are functions on them subject to the co-cycle condition 
$$\sum_i \div(f_i)=0 $$
Relations in this group are defined as follows. If $Y$ is a surface on $X$ and  $f$ and $g$ are functions on $Y$, one has the Steinberg element  $\{f,g\}$ in $K_2(\C(Y))$, where $\C(Y)$ is the function field of $Y$. To such an element one can consider the sum, called the tame symbol of $\{f,g\}$, 
$$\tau(\{f,g\})=\sum_{W\in Y^{(1)}} (W,(-1)^{\ord_W(f)\ord_W(g)}\frac{f^{\ord_W(g)}}{g^{\ord_W(f)}}).$$
where $Y^{(1)}$ is the  collection of curves on $Y$. This is a finite sum and satisfies the co-cycle condition, hence lies in the above group.  An element is said to be $0$ in $H^{2g-1}_{\M}(X,\ZZ(g))$  if it lies in the image of the free abelian group generated by the tame symbols of elements of $K_2(\C(Y))$ for some surface $Y \subset X$. The group $H^{2g-1}_{\M}(X,\ZZ(g))$ is the same as the higher Chow group $CH^g(X,1)$.

In the group $H^{2g-1}_{\M}(X,\ZZ(g))$ there are certain {\bf decomposable} cycles coming from the product 
$$H^{2g-1}_{\M}(X,\ZZ(g))_{dec}= \Im \left( H^{2g-2}_{\M}(X,\ZZ(g-1) \otimes  H^1_{\M}(X,\ZZ(1))   ) \longrightarrow H^{2g-1}_{\M}(X,\ZZ(g))\right).$$ 
This is simply the image of $CH^{g-1}(X) \otimes \C^*$ - namely elements of the form $(C,a)$ where $C$ is a curve and $a$ is a constant function.  

The group of  {\bf indecomposable} cycles  is defined as the quotient 
$$H^{2g-1}_{\M}(X,\ZZ(g))_{ind}=H^{2g-1}_{\M}(X,\ZZ(g))/H^{2g-1}_{\M}(X,\ZZ(g))_{dec}.$$
In general it is not easy to find non trivial  elements in this group.

If $X$ is a surface, one way of constructing a possibly non-trivial cycle is the following. Since we will use it later we label it a proposition. 

\begin{prop} Suppose $C$ is a {\bf nodal rational}  curve on a surface $X$ with node $P$. Let $\pi:\tilde{X}\rightarrow X$ denote the blow up of $X$ at $P$ such that the strict transform $\tilde{C}$ of $C$ no longer has a node at $P$. Let $P_1$ and $P_2$ be the two points lying over $P$ on $\tilde{C}$.  If $E_P$ is the exceptional fibre it is a rational curve and it meets the strict transform at the points $P_1$ and $P_2$. Let $f$ be a function with $\div(f)=P_1-P_2$ on $\tilde{C}$ and let $g$ be a function on $E_P$ with divisor $\div (g)=P_2-P_1$. Then
	 $$\pi_*((\tilde{C},f)+(E_P,g))$$
	  is a cycle in $H^{3}_{\M}(X,\ZZ(2))$.
	
	\label{construction}
	
	\end{prop}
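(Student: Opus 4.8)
The plan is to reduce everything to a single co-cycle verification on the blow-up $\tilde{X}$, and then transport the resulting class down to $X$ via proper pushforward. First I would record that the two functions exist: since $C$ is a nodal rational curve and $\pi$ separates the two branches at $P$, the strict transform $\tilde{C}$ is smooth and is precisely the normalization of $C$, hence $\tilde{C}\cong\CP^1$; likewise the exceptional fibre $E_P\cong\CP^1$. On a rational curve a rational function with any prescribed degree-zero divisor exists, so $f$ with $\div(f)=P_1-P_2$ on $\tilde{C}$ and $g$ with $\div(g)=P_2-P_1$ on $E_P$ are available, as stated.

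The heart of the argument is to check the co-cycle condition for $(\tilde{C},f)+(E_P,g)$ on $\tilde{X}$. The key geometric input, supplied by the blow-up, is that $\tilde{C}$ meets $E_P$ in exactly the two points $P_1,P_2$, and that these are the \emph{same} two points that support both $\div(f)$ and $\div(g)$. Viewing each divisor as a $0$-cycle on $\tilde{X}$ supported on $\tilde{C}\cup E_P$, one then has $\div(f)+\div(g)=(P_1-P_2)+(P_2-P_1)=0$. Hence $(\tilde{C},f)+(E_P,g)$ satisfies the co-cycle condition of Section \ref{motiviccycles} and defines a class in $H^{3}_{\M}(\tilde{X},\ZZ(2))$.

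Finally I would invoke functoriality. Since $\pi$ is proper and birational between smooth surfaces, it is of relative dimension $0$, so proper pushforward gives a homomorphism $\pi_*\colon H^{3}_{\M}(\tilde{X},\ZZ(2))\to H^{3}_{\M}(X,\ZZ(2))$ which carries valid cycles to valid cycles; applying it to the class just constructed proves the claim. For concreteness one can also describe the image explicitly: $\pi|_{\tilde{C}}$ is birational onto $C$, so $\pi_*(\tilde{C},f)=(C,f)$ (the norm is trivial in degree $1$), while $\pi$ contracts $E_P$ to the point $P$, so $\pi_*(E_P,g)=0$ because a curve collapsed to a point contributes nothing in this codimension. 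The co-cycle condition on $X$ then reads $\div_C(f)=\pi_*(P_1-P_2)=P-P=0$, consistent with the above. The one step requiring genuine care—and the main potential obstacle—is precisely this bookkeeping of the two points: one must be certain that the zero/pole of $f$ on $\tilde{C}$ and the pole/zero of $g$ on $E_P$ are attached to the identical points $P_1,P_2$ of $\tilde{X}$, so that the cancellation is exact, and that the contracted-curve term indeed vanishes under $\pi_*$ rather than producing a spurious contribution at $P$.
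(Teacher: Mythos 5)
Your proof is correct. In fact the paper states Proposition \ref{construction} with no proof at all (it is quoted as a known construction from \cite{sree22-1}), so there is no written argument to compare against; what you give --- verifying the co-cycle condition on $\tilde{X}$, namely $\div(f)+\div(g)=(P_1-P_2)+(P_2-P_1)=0$ because $E_P\cap\tilde{C}=\{P_1,P_2\}$ are literally the same points of $\tilde{X}$ supporting both divisors, and then applying proper pushforward $\pi_*$, under which $(\tilde{C},f)\mapsto(C,f)$ by birationality and $(E_P,g)\mapsto 0$ since $E_P$ is contracted --- is exactly the standard verification the paper leaves implicit, and it is the computation used later for $\tZ_{\QQ_0}(z)=(\hat{\CC}(z),f_{P(z)})+(E_{P(z)},g_{P(z)})$.

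One small caution on your existence remark: the statement does not say that $P$ is the \emph{only} node of $C$, so $\tilde{C}$ need not be smooth or equal to the full normalization of $C$ (indeed, in the paper's application the strict transform $\hat{\CC}(z)$ can retain nodes over the tangency points, which are not blown up). This does not damage your proof, since the existence of $f$ and $g$ is part of the hypotheses, and in the cycle formalism the divisor of a function on a singular curve is computed on the normalization, where $P_1$ and $P_2$ are smooth points; but your identification $\tilde{C}\cong\CP^1$ should be weakened to ``$\tilde{C}$ is rational and $P_1,P_2$ are smooth points of it,'' which is all the cancellation argument needs.
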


\subsection{Old cycles: Collino's cycle} \label{collino}

Let $C$ be a hyperelliptic curve of genus $g$ and $J(C)$ its Jacobian. Collino \cite{coll} constructed cycle in the group $H^{2g-1}_{\M}(J(C),\ZZ(g))=CH^{g}(J(C),1)$ as follows:

Let $P_1$ and $P_2$ be two Weierstrass points  or, equivalently, two of the ramification points of the map to $\CP^1$. Let $\TRR$ be a third point on $C$ distinct from $P_1$ and $P_2$.  There exists a function $f$ on $C$ with 
$$\div(f)=2P_1-2P_2$$
we further assume, though not strictly necessary for the purpose of constructing the cycle, that $\TRR$ is also a Weierstrass point and that $f(\TRR)=1$. 

Let $\iota_{P_i}:C \rightarrow J(C)$ be the map 
$$\iota_{P_i}(x)=x-P_i$$
Let $C_{P_i}=\iota_{P_i}(C)$. Since $P_1-P_2$ is a two torsion point on $J(C)$, $P_1-P_2=P_2-P_1$. One has  $C_{P_1} \cap C_{P_2}=\{O,P_1-P_2\}$ where $O$ is the identity in $J(C)$. Let $f_{P_i}$  be the function $f$ considered as a function on $C_{P_i}$. Then 
$$\div(f_{P_1})=2(0)-2(P_2-P_1) \text{ and } \div(f_{P_2})=2(P_1-P_2)-2(O)  $$
Hence $Z_{P_1,P_2,R}=(C_{P_1},f_{P_1})+(C_{P_2},f_{P_2})$ satisfies the cocycle condition and determines an element of the motivic cohomology group  $H^{2g-1}_{\M}(J(C),\ZZ(g)$.  

This cycle is defined as long as the hyperelliptic curve is irreducible. Collino \cite{coll}  shows futher  that it is generically indecomposable. In the case of genus $2$ the cycle is defined on the complement of a component of  the moduli of products of elliptic curves.

\section{New cycles on the moduli of Abelian sufaces}

\label{mainconstruction}

In this section we describe the construction of new cycles in the generic fibre of the universal Abelian surface over the Siegel modular threefold with full level $2$ structure. The idea is to use Proposition \ref{construction} -- but we do not have any rational curves on an Abelian variety. Hence we have to go to an auxilliary variety, the Kummer $K3$ surface. This construction is done in \cite{sree22-1} but we will recall it here. 

\subsection{Humbert Surfaces}

Over the Siegel modular threefold the Picard number of the generic Abelian surface is $1$. There are certain divisors corresponding to the moduli of those Abelian surfaces where the Picard number is at least $2$. These are called {\bf Noether-Lefshetz} divisors, or {\bf Heegner} divisors, or {\bf Humbert} surfaces as they were first studied by Humbert. 

Let $A=A_z$ be an Abelian surface  corresponding to a point $z$ on the Siegel modular threefold. Let $\theta$ be its  principal polarisation, so $\theta^2=2$. If $D$ is in $NS(A)$ define its {\bf Humbert invariant} 
$$\Delta(D)=(D.\theta)^2-2D^2$$
This is essentially the negative of the intersection pairing on the orthogonal complement of the class of $\theta$ in the Picard lattice $NS(A_z)$. In particular it is non-negative and is non-zero if and only if $D \notin \ZZ \cdot \theta$.  

Let $H_{\Delta}$  be the closure of the moduli of those Abelian surfaces in whose Picard lattice there is an element of Humbert invariant $\Delta$. $H_{\Delta}$ is called the {\bf Humbert Surface of invariant $\Delta$}. These is the same as Heegner divisors $H(\gamma,N)$ or Noether-Lefschetz divisors $D_{M,r}$ for some choices of $N,\gamma, M$ and $r$ \cite{pete}. Note that $H_1$ is the moduli of products of elliptic curves.

\subsection{The Theorem of Birkenhake and Wilhelm}

Birkenhake and Wilhelm \cite{biwi} determine conditions under which the moduli of an Abelian surface lies on $H_{\Delta}$ for certain $\Delta$. This is a generalisation of an old theorem of Humbert \cite{humb} for $\Delta=5$. The idea is the following: Given an Abelian surface $A=J(H)$ for some genus $2$ curve $H$,  the square of the principal polarization determines a map  $\phi:A \rightarrow \CP^3$. This is a double cover ramified at the $16$ two torsion points.  The image $K_A$ is called the {\bf Kummer surface} and is isomorphic to $A/\pm 1$. It is a quartic hypersurface in $\CP^3$.   Projecting from the image of $0$ gives a map  $\psi:K_A \rightarrow \CP^2$ and it turns out that this is a double cover ramified at six lines tangent to a conic. Let $\CP^2_A=(\CP^2,l_1,\dots,l_6)$ be the collection of $\CP^2$ with the six lines. Let $S_A=\bigcup_{i=1}^6 l_i$. This is a degenerate sextic with fifteen nodes corresponding to the images of the fifteen non-zero two torsion points. One has maps  
$$
A \stackrel{\phi}{\longrightarrow} K_A \stackrel{\psi}{\longrightarrow} \CP^2_A.$$
If one blows up $K_A$ at the sixteen points then one obtains a $K3$ surface $\TK_A$ which we will refer to as the {\bf Kummer $K3$ surface}. 

Hence to an Abelian surface, one can canonically associate a configuration of six lines in $\CP^2$ tangent to a conic. Conversely,  a degenerate sextic $S_A \subset \CP^2$ tangent to a conic determines a principally polarised Abelian surface. 

Given this correspondence one can ask what conditions on $\CP^2_A$ determines if the moduli point of the corresponding Abelian surface lies on $H_{\Delta}$ for some $\Delta$?.  Humbert \cite{humb} proved the following theorem. 

\begin{thm} Let $A$ be an Abelian surface and $\CP^2_A$ the corresponding configulation of $\CP^2$ with six lines $l_i$ tangent to a conic. Let $q_{ij}=l_i \cap l_j$ denote the fifteen nodal points of the degenerate sextic $S_A$. Then $A$ corresponds to a point on $H_{5}$ if and only if there exist a conic $\QQ$ passing through five of the points $q_{ij}$ and tangent to the remaining line. 
\end{thm}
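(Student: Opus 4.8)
The plan is to translate membership in $H_5$ into the existence of one divisor class on $A$, and then to read that class off the plane geometry of $\CP^2_A$ through the tower $A \stackrel{\phi}{\to} K_A \stackrel{\psi}{\to} \CP^2_A$. By definition $A$ lies on $H_5$ exactly when there is a class $D \in NS(A)$ with $\Delta(D) = (D.\theta)^2 - 2D^2 = 5$; since $-1$ acts trivially on $NS(A)$, any such class is represented by a symmetric divisor, which therefore descends along $\phi$ to a curve on $K_A$ and projects along $\psi$ to a plane curve $\Gamma \subset \CP^2$. The entire argument is the construction of a dictionary between the numerical data $(D.\theta, D^2)$ and the degree of $\Gamma$ together with its incidence with the six lines $l_i$ and the fifteen nodes $q_{ij}$. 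A first consistency check guides the dictionary: a conic in $\CP^2$ has five moduli, so passing through five prescribed nodes pins it down, and the extra tangency to the sixth line is one further condition; this cuts out a codimension-one locus in the space of configurations, matching the fact that $H_5$ is a divisor in the modular threefold.

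For the implication ``conic $\Rightarrow H_5$'' I would start from a conic $\QQ$ through five of the nodes and tangent to the remaining line, form the pullback $\psi^{*}\QQ$ on $K_A$, pass to its strict transform on the Kummer $K3$ surface $\TK_A$ under the blow-down $\beta:\TK_A \to K_A$, and finally pull back along $\phi$ to obtain a symmetric curve on $A$ whose (primitive) class I call $D$. The two invariants $(D.\theta)$ and $D^2$ are then computed by the projection formula, using $\phi^{*}\OO_{K_A}(1) = 2\theta$. Since $\psi$ is a double cover branched over $S_A = \bigcup l_i$, a generic conic meets the branch locus in $2\cdot 6 = 12$ points and has preimage of genus $5$; each passage of $\QQ$ through a node $q_{ij}$ and each tangency to a line modifies the ramification, and it is precisely the prescribed incidence that should force $\Delta(D) = 5$. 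Tracking the factors of $2$ from the two degree-$2$ maps, together with the contributions of the sixteen exceptional $(-2)$-curves $E_i \subset \TK_A$, is what determines $(D.\theta, D^2)$.

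For the converse ``$H_5 \Rightarrow$ conic'' I would take $D$ with $\Delta(D) = 5$ and first normalise it, modulo $\ZZ\cdot\theta$ and modulo sign, to an effective representative, which one may take to have $(D.\theta, D^2) = (3,2)$; this uses that on an Abelian surface an irreducible curve has non-negative self-intersection. Pushing the symmetric representative down through $\phi$ and $\psi$ produces a plane curve of degree $2$; one checks it is an honest irreducible conic and then re-expresses $D.\theta$ and $D^2$ as incidence numbers downstairs to conclude that $\Gamma$ must meet the configuration in exactly five nodes and be tangent to the sixth line. Which five nodes and which line occur is dictated by how the $2$-torsion points, i.e. the nodes, sit relative to the support of $D$.

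The technical heart, and the step I expect to be the main obstacle, is the exact computation of intersection multiplicities under the two ramified double covers while keeping track of the sixteen $(-2)$-curves on $\TK_A$ and of the different local contributions of a node-passage versus a tangency. The delicate point is to show that the incidence data ``five nodes plus one tangent line'' matches the discriminant value $\Delta = 5$ on the nose, rather than a neighbouring value, and, in the converse direction, that the degree-$2$ curve produced is genuinely a smooth conic rather than a degenerate pair of lines through the configuration. Once this numerical dictionary is established, both implications follow by running the construction in the two directions.
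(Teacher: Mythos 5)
There is a genuine gap, and it sits exactly at the step your argument needs most. (For context: the paper does not prove this theorem at all --- it is Humbert's classical result, cited from \cite{humb} and \cite{biwi} --- but the mechanism your sketch is missing is precisely the one the paper records as Proposition \ref{rationalcurve}.) In the direction ``conic $\Rightarrow H_5$'' you propose to take the pullback $\phi^*\psi^*\QQ$ of the whole conic and call (the primitive part of) its class $D$. This cannot work: in $NS(A)$ one has $\phi^*\psi^*(\text{line}) \equiv 2\theta$, so $\phi^*\psi^*\QQ \equiv 4\theta$, a class proportional to $\theta$ with Humbert invariant $0$; its primitive part is $\theta$ itself, again with invariant $0$. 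No choice of incidence of $\QQ$ with the six lines changes this, because the class of the total pullback only sees the degree of $\QQ$. The point you are missing is what the special incidence actually buys: when $\QQ$ meets the branch sextic $S_A$ only at five nodes $q_{ij}$ and one point of tangency, every local intersection with the branch locus is even, so all ramification of the induced double cover $\CC=\psi^{-1}(\QQ)\to\QQ$ is concentrated at singular points of $\CC$; the normalization $\tilde\CC\to\CP^1$ is then an \emph{unramified} double cover of $\CP^1$, hence disconnected, and $\CC$ splits as $\CC_1\cup\CC_2$. The class with Humbert invariant $5$ is $D=\phi^*(\CC_1)$, the pullback of \emph{one component}, which is not proportional to $\theta$ even though $\phi^*(\CC_1)+\phi^*(\CC_2)\equiv 4\theta$ is. Without this splitting argument your construction produces no new class in $NS(A)$ whatsoever, and the forward implication collapses at the first step; with it, the ``numerical dictionary'' you defer to becomes a concrete computation of $(\DD_1.\theta,\DD_1^2)$ for one component.

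The converse direction has a related soft spot. You assert that pushing a symmetric representative of $D$ (normalized to $(D.\theta,D^2)=(3,2)$) down through $\phi$ and $\psi$ yields a plane curve of degree $2$, but the degree of the image is governed by $D\cdot 2\theta=6$ together with the degree of $D$ onto its image; you need $D$ to be symmetric \emph{and} the restriction of $\psi\circ\phi$ to $D$ to be $2{:}1$, and you then need to run the even-intersection analysis backwards to show the image conic meets $S_A$ in exactly five nodes plus one tangency (equivalently, that the preimage of the image conic is reducible, one component being $D$). You flag this as ``the technical heart,'' which is fair, but as written the proposal contains neither the splitting mechanism nor the reverse incidence analysis, and these two items are not bookkeeping --- they are the entire content of Humbert's theorem. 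The moduli-count consistency check (five conditions plus one tangency versus $H_5$ being a divisor) is a reasonable heuristic but carries no weight in either implication.
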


Birkenhake and Wilhelm \cite{biwi}  generalised this to other $\Delta$. For certain classes of $\Delta=\Delta(d,k)$ determined by integers $k>2$ and $d>0$,  they show that the moduli point of an Abelian surface lies on $H_{\Delta}$ implies that there is a rational curve $\QQ$ of degree $d$ meeting the sextic $S_A$  at $k$  of the points $q_{ij}$ and the remaining lines at points of even multiplicity. Conversely, if there is such a rational curve, then it corresponds to some $\Delta' \leq \Delta$. 

To recover a divisor $D$ of invariant $\Delta>0$ from the rational curve,  we observe the following. 

\begin{prop} Let $\QQ$ be a rational curve on $\CP^2_A$ which meets the sextic $S_A$ only at points of even multiplicity or at the double points $q_{ij}$ as in the theorems of Birkenhake and Wilhelm. Let $\psi:\CC \longrightarrow Q$ be the double cover. Then $\CC$ is {\bf not} irreducible and is the union of two rational curves $\CC_1$ and $\CC_2$. 
	
	\label{rationalcurve}
\end{prop}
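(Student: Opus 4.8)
The plan is to realize the double cover $\psi:K_A\to\CP^2$ concretely as a cover branched along the sextic and then restrict it to $\QQ$. Since $S_A=\bigcup_{i=1}^6 l_i$ is cut out by $\prod_{i=1}^6\ell_i=0$ for linear forms $\ell_i$, and $\OO_{\CP^2}(3)$ is a square root of $\OO_{\CP^2}(S_A)=\OO_{\CP^2}(6)$, the Kummer surface is the double cover
$$
K_A=\{\,w^2=\textstyle\prod_{i=1}^6\ell_i\,\},\qquad w\in H^0(\CP^2,\OO(3)),
$$
with deck involution $w\mapsto-w$ (the Kummer involution $-1$ on $A$ pushed to $K_A$). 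First I would pull this back along $\QQ$. As $\QQ$ is rational, let $\nu:\CP^1\to\QQ\subset\CP^2$ be its normalization; then $\CC=\psi^{-1}(\QQ)$ is dominated by the double cover
$$
\tilde\CC=\{\,w^2=f\,\}\longrightarrow\CP^1,\qquad f:=\nu^*\Big(\textstyle\prod_{i=1}^6\ell_i\Big)\in H^0\big(\CP^1,\OO(6d)\big),
$$
so everything reduces to understanding the double cover of $\CP^1$ attached to $f$, whose divisor is $\nu^*(\QQ\cap S_A)$.

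Next I would invoke the elementary reducibility criterion for double covers of $\CP^1$: the cover $w^2=f$ splits into two components if and only if $f$ is a perfect square in $\C(\CP^1)$, equivalently if and only if $\div(f)=\sum_p m_p\,p$ has $m_p$ even at every point $p$. When some $m_p$ is odd the cover is a genuine branched, hence connected, cover; when all $m_p$ are even one has $w=\pm\sqrt{f}$, giving two sheets. The heart of the argument is therefore to check that the Birkenhake--Wilhelm meeting condition forces every local multiplicity of $f$ to be even. Away from the nodes, $\QQ$ meets a single line $l_i$ at a point of even order, contributing an even power of $\ell_i$ to $f$ and nothing else. At a double point $q_{ij}=l_i\cap l_j$ through which $\QQ$ passes simply, both $\ell_i|_{\QQ}$ and $\ell_j|_{\QQ}$ vanish to order $1$ while the remaining $\ell_k$ are nonzero there, so $f$ vanishes to order $1+1=2$. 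In either case $m_p$ is even, $f$ is a square, and $\tilde\CC$ (hence $\CC$) is reducible.

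Finally, writing $f=h^2$ exhibits the two components as $\{w=h\}$ and $\{w=-h\}$; each maps with degree $1$ onto $\QQ$ and is therefore birational to $\CP^1$, so $\CC=\CC_1\cup\CC_2$ with both $\CC_i$ rational, and the two are exchanged by the deck involution $w\mapsto-w$. I expect the one genuinely delicate point to be the node analysis: one must confirm that the curves produced by Birkenhake--Wilhelm really do pass through each $q_{ij}$ in a way that makes the total order even (a transverse simple passage, order $1+1$), rather than being tangent to just one of the two local branches of $S_A$, which would give an odd contribution and a connected cover. Making the phrase ``meets $S_A$ only at points of even multiplicity or at the double points'' precise as the statement that the pullback divisor $\nu^*(\QQ\cap S_A)$ is $2$-divisible is exactly what the proof must pin down.
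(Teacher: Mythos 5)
Your proof is correct, and at its core it performs the same reduction as the paper: restrict the double cover to $\QQ$, pass to the normalization $\CP^1$ of $\QQ$, and show the induced double cover of $\CP^1$ must split. Where the two differ is in how that splitting is certified. The paper argues topologically: since $\QQ$ meets $S_A$ only at points of even local multiplicity, every ramification point of $\CC \rightarrow \QQ$ is a singular (two-branch) point of $\CC$, so the cover of normalizations $\tilde{\CC} \rightarrow \tilde{\QQ} \cong \CP^1$ is \emph{unramified}, and since $\pi_1(\CP^1)$ is trivial there are no irreducible unramified double covers. You instead write the cover by its equation $w^2 = f$ with $f = \nu^*\bigl(\prod_i \ell_i\bigr)$ and invoke the Kummer-theoretic criterion: on $\CP^1$ a rational function with everywhere-even divisor is a square, so the cover splits explicitly as $w = \pm h$. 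These finishing lemmas are two faces of the same fact (triviality of connected double covers of $\CP^1$, whether phrased via $\pi_1$ or via the vanishing of $2$-torsion in the Jacobian), so the route is essentially shared; what your version buys is that the local computation the paper compresses into the single assertion ``the ramification points are nodes of $\CC$'' is made explicit: order $1+1=2$ at a node crossed transversally, even order at tangencies. Your closing caveat is also exactly the right one, and it applies equally to the paper's proof: the hypothesis must be read, as in Birkenhake--Wilhelm, so that the total pulled-back multiplicity at each node is even; a curve tangent to only one of the two branches of $S_A$ at a node (local order $2+1=3$) would leave genuine ramification after normalization and the cover would stay irreducible. Both proofs silently exclude this, yours at least names it.
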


\begin{proof} The map $\CC \longrightarrow \QQ$ is ramified at the points of $\QQ \cap S_A$ which are all double points. Hence the ramification points are nodes of $\CC$. Let $\psi:\tilde{\CC} \longrightarrow \tilde{\QQ}$ be the maps between the normalizations induced by $\psi$. As all the ramification points of $\CC \rightarrow \QQ$ are singular points,  $\tilde{\CC}$  is an {\bf unramified} double cover of a $\CP^1$. Since the fundamental group of $\CP^1$ is trivial, there are no {\bf irreducible} unramified double covers.  Hence $\tilde{\CC}$ is the union of two rational curves meeting at a point and $\CC$ is the union of two curves $\CC_1$ and $\CC_2$ which meet at the points lying over the ramification points. 
	
\end{proof}

Let $\DD_i=\phi^*(\CC_i)$. Then $\DD_1$ (or $\DD_2$) will not be a multiple of the  $\theta$. Let $D=\DD_1$. Since $\DD_1+\DD_2=\phi^*(\psi^*(\QQ))$ is  multiple of $\theta$ it does not matter which one we choose. 

\subsection{A Theorem in Enumerative Geometry} 

We would like to deform the rational curve $\QQ$ that meets the configuration of six lines at double points to a rational curve that meets the configuration at fewer double points - but with the advantage that it always exists.  A classical theorem in enumerative geometry states that there exists  a unique conic passing through $5$ points in  general position in $\CP^2$. This has the following well known generalization. 

\begin{thm} Let $n_d$ be the number of {\em rational} curves of {\em degree $d$} passing through $3d-1$ points in general position. Then $n_d$ is finite and non-zero. 
		
\end{thm}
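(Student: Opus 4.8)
The plan is to prove finiteness by a dimension count on the Severi variety and non-vanishing through the theory of stable maps together with Kontsevich's recursion. First I would realize plane curves of degree $d$ as the points of the linear system $|\OO_{\CP^2}(d)|\cong \CP^N$ with $N=\frac{d(d+3)}{2}$. A smooth plane curve of degree $d$ has arithmetic genus $p_a=\frac{(d-1)(d-2)}{2}$, so a rational (geometric genus $0$) curve of degree $d$ is one that has acquired $\delta=p_a$ nodes. The Severi variety $V_d\subset\CP^N$ parametrising irreducible nodal curves of geometric genus $0$ then has
$$\dim V_d = N-\delta = \frac{d(d+3)}{2}-\frac{(d-1)(d-2)}{2}=3d-1.$$
Since the condition that a curve pass through a fixed point $p$ is a hyperplane section of $\CP^N$, imposing $3d-1$ general point conditions cuts $V_d$ down to dimension $0$. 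Finiteness then follows, provided one invokes a general-position argument (Kleiman--Bertini) using the transitive action of $PGL_3$ on $\CP^2$ to guarantee that, for general points, the intersection is transverse and hence a finite, reduced set.

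For the non-vanishing I would pass to the Kontsevich moduli space $\overline{M}_{0,n}(\CP^2,d)$ of $n$-pointed genus-$0$ stable maps of degree $d$. Because $\CP^2$ is convex, this is an irreducible, normal Deligne--Mumford stack of the expected dimension $3d+n-1$, carrying evaluation maps $ev_i:\overline{M}_{0,n}(\CP^2,d)\to\CP^2$. Taking $n=3d-1$, the sought number is the Gromov--Witten invariant
$$n_d = \int_{\overline{M}_{0,3d-1}(\CP^2,d)}\ \prod_{i=1}^{3d-1}ev_i^{*}[\mathrm{pt}],$$
which is a $0$-dimensional count since $[\mathrm{pt}]$ has codimension $2$ and $2(3d-1)=6d-2$ equals the dimension of the moduli space. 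This matches the naive count from $V_d$ because the generic such stable map is a birational immersion onto an honest nodal rational curve through the points, and no multiple-cover or excess contributions intrude for general point constraints.

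To see $n_d>0$, I would use Kontsevich's recursion, obtained by pulling back the linear equivalence of the three boundary divisors of $\overline{M}_{0,4}\cong\CP^1$ along the forgetful and stabilisation maps and evaluating against the relevant point and line classes. This expresses $n_d$ in terms of the $n_{d'}$ with $d'<d$, with base case $n_1=1$ (the unique line through two general points). By induction $n_d$ is assembled from strictly smaller positive invariants with combinatorial coefficients, and one verifies that the recursion returns strictly positive values; this yields $n_d>0$. As an alternative, more classical route to non-vanishing, one may instead exhibit a single genuine rational curve through the points by specialising to a connected tree of $d$ lines and smoothing it, thereby showing the count is at least one.

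The serious content is entirely in the non-vanishing: finiteness is an elementary dimension count, whereas positivity requires either the associativity of quantum cohomology encoded in Kontsevich's recursion or a delicate smoothing argument for a degenerate configuration. The point one must control throughout is transversality, namely that for points in general position the constraints carve out a reduced, $0$-dimensional scheme so that the enumerative count genuinely equals the intersection (equivalently Gromov--Witten) number, rather than being corrupted by excess intersection. The convexity of $\CP^2$ together with Kleiman's theorem is precisely what makes this control possible, and securing it is the main obstacle in turning the dimension heuristic into a rigorous statement.
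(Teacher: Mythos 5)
The paper itself offers no argument for this statement---it simply cites McDuff--Salamon, Prop.\ 7.4.8, where the result is established in the framework of genus-zero Gromov--Witten theory---so your proposal has to be judged as a self-contained proof attempt. Its first half is essentially fine: the Severi-variety dimension count $\dim V_d = 3d-1$, together with the fact that passing through a point is a hyperplane condition on $|\OO_{\CP^2}(d)|\cong\CP^N$, gives finiteness, and your identification of $n_d$ with the Gromov--Witten invariant (enumerativity for general point constraints, absence of multiple-cover and excess contributions) is the standard Fulton--Pandharipande argument.

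The gap is in the non-vanishing, which you yourself flag as the serious content. Kontsevich's recursion does not manifestly return positive values: its terms have both signs. Concretely, for $d=4$ the $(d_1,d_2)=(1,3)$ term is
$$n_1 n_3\left(1\cdot 9\binom{8}{1} - 1\cdot 3\binom{8}{2}\right) = 12\,(72-84) = -144,$$
and $n_4=620$ emerges only after cancellation against the $(3,1)$ and $(2,2)$ terms. So ``one verifies that the recursion returns strictly positive values'' is something you can check for any \emph{fixed} $d$ by computation, but it is not an induction establishing positivity for \emph{all} $d$; no term-by-term positivity is available in this form of the recursion. Your fallback route is also broken as stated: a union (or tree) of $d$ lines can pass through at most $2d$ general points, and $2d < 3d-1$ for $d\geq 2$, so one cannot ``specialise to a connected tree of $d$ lines through the points and smooth it.'' A correct degeneration argument must be inductive---for instance, take a rational curve of degree $d-1$ through $3d-4$ of the points and a line through the remaining two, smooth a single node of the union to keep geometric genus zero, and then argue that the resulting one-parameter family of rational curves through $3d-2$ points sweeps out a two-dimensional region, so some member passes through a final general point (a Caporaso--Harris style argument)---or else one should invoke a count in which every contribution is positive, such as the Caporaso--Harris recursion or Mikhalkin's tropical correspondence. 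Without one of these inputs, the non-vanishing half of the theorem remains unproved.
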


\begin{proof} \cite{mcsa}, Prop. 7.4.8, page 230. \end{proof}

The exact number $n_d$ was computed by Kontsevich-Manin \cite{koma} and Ruan-Tian \cite{ruti} in the early 90s. Classically it was known that  $n_1=1$, $n_2=1$ and $n_3=12$. More generally one has 

\begin{thm} Let $0 \leq k \leq 3d-1$. Let $n_{d,k}$ be the number of {\em rational curves} passing through $k$ points and meeting $(3d-1-k)$ lines tangentially. Then $n_{d,k}$ is finite and non-zero. 
	
	\end{thm}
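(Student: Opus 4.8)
The plan is to realize $n_{d,k}$ as a genuine enumerative count on the Kontsevich space of genus-$0$ stable maps and to establish the two assertions, finiteness and non-vanishing, separately. First I would set up $\overline{M}_{0,N}(\CP^2,d)$ with $N=3d-1$ marked points; this space has dimension $3d+N-1=6d-2$. Each of the $k$ point conditions is imposed by requiring the $i$-th evaluation map $ev_i$ to take a prescribed value $p_i\in\CP^2$, a condition of codimension $2$; each of the $3d-1-k$ tangency conditions is imposed by requiring the corresponding marked point to land on a line $\ell_j$ (codimension $1$) and the map to be tangent to $\ell_j$ there (a further codimension $1$), again codimension $2$. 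Since $2N=6d-2$ equals the dimension, a general such intersection is $0$-dimensional, which is the balance underlying finiteness. As a cross-check, in the classical Severi-variety picture the space of rational degree-$d$ curves has dimension $3d-1$ and each of the two kinds of condition is codimension $1$, giving the same count.

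For finiteness itself I would invoke Kleiman's transversality theorem. Over $\C$ the group $PGL_3$ acts transitively on $\CP^2$ and on the dual space of lines, so the locus of stable maps through a general point and the locus tangent to a general line are general translates of fixed cycles. Kleiman's theorem then guarantees that for a general choice of the $k$ points and the $3d-1-k$ lines the imposed cycles meet in the expected dimension $0$, with the solution maps immersions meeting the lines in honest simple tangencies, away from nodes and away from the boundary of $\overline{M}_{0,N}(\CP^2,d)$. This yields finiteness of $n_{d,k}$, with the caveat that one must check that the boundary strata (reducible domains, multiple covers, and components collapsing into one of the constraint lines) do not meet the dimension-$0$ locus for general data.

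For non-vanishing I would argue by deformation invariance together with a positivity recursion. The numbers $n_{d,k}$ are the enumerative values of genus-$0$ Gromov-Witten-type invariants with tangency constraints, hence independent of the general configuration chosen; they fit into a Caporaso-Harris / Kontsevich-style recursion in which every term is non-negative and whose base is strictly positive (in particular the purely point-theoretic counts $n_{d,3d-1}=n_d$, which are non-zero by the preceding theorem). Running the recursion downward in $k$, trading a point condition for a tangency condition one step at a time, preserves strict positivity and delivers $n_{d,k}>0$ for all $0\le k\le 3d-1$. Alternatively one can degenerate to a special but fully understood configuration and exhibit a single solution of multiplicity one, so that conservation of number forces $n_{d,k}\ge 1$.

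I expect the tangency conditions to be the main obstacle. Point incidences are clean in Gromov-Witten theory and are handled transparently by the evaluation maps, but tangency is a non-transverse incidence whose locus can acquire spurious components: maps that are tangent only because they are singular at the contact point, multiple covers, or limits in which a whole component falls into the constraint line. Ensuring that these degenerate configurations neither inflate the dimension (breaking finiteness) nor absorb the entire invariant (breaking the enumerative, as opposed to merely virtual, positivity) is the delicate point, and is exactly where the genericity of the chosen lines and the convexity of $\CP^2$ must be used.
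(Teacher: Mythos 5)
The first thing to note is that the paper itself offers no argument for this theorem: like the preceding statement (cited to McDuff--Salamon), it is imported from the literature. The text identifies the $n_{d,k}$ as the classical \emph{characteristic numbers} and points to Pandharipande \cite{pand}, where they are realized and computed as intersection numbers of divisors on the Kontsevich space of stable maps. Your setup --- $\overline{M}_{0,3d-1}(\CP^2,d)$ of dimension $6d-2$, each incidence and each tangency condition cutting codimension $2$, Kleiman transversality for a general configuration --- is exactly the framework of that reference, so the finiteness half of your sketch is the standard argument and is sound in outline, modulo the boundary analysis you yourself flag.

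The genuine gap is in the non-vanishing half. Your central mechanism --- that the $n_{d,k}$ ``fit into a Caporaso--Harris / Kontsevich-style recursion in which every term is non-negative'' --- is asserted rather than established, and the two recursions you name do not supply it: Kontsevich's recursion (from WDVV) has terms of \emph{both} signs, so positivity is not term-by-term visible from it even for $n_d$; and the Caporaso--Harris recursion governs contact conditions along a \emph{single} fixed line, not tangency to $3d-1-k$ general lines. The missing idea, which is what \cite{pand} actually provides, is a linear equivalence on the space of stable maps expressing the tangency divisor as a non-negative combination of the incidence divisor and the boundary: schematically $\mathcal{T}\equiv\frac{d-1}{d}\,\mathcal{H}+\frac{1}{d}\sum_{d_1+d_2=d}d_1d_2\,\Delta_{d_1,d_2}$. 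Intersecting this with the remaining conditions gives $n_{d,k}\ \ge\ \frac{d-1}{d}\,n_{d,k+1}$ plus non-negative boundary terms, and downward induction from $n_{d,3d-1}=n_d>0$ (the preceding theorem) yields positivity; as a consistency check, for conics this reads $\mathcal{T}=\frac12\mathcal{H}+\frac12\Delta_{1,1}$, giving $n_{2,4}=\frac12\cdot 1+\frac12\cdot 3=2$, and for cubics $n_{3,7}=\frac23\cdot 12+\frac23\cdot 42=36$, matching the values quoted in the paper. Without this relation --- or, for your alternative route, an explicit degenerate configuration with a single multiplicity-one solution, which you mention but do not exhibit --- the positivity claim does not close. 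Finally, the difficulty you correctly isolate (the tangency class on the compactified space picks up spurious loci: nodes on the constraint line, components mapping into it, multiple covers, so that deformation-invariant intersection numbers need not equal honest curve counts) is precisely the substantive verification in the cited work; flagging it is not the same as resolving it, and both finiteness with reduced multiplicity-one solutions and the enumerative meaning of the positive recursion depend on it.
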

	
For instance, $n_{2,0}=n_{2,5}=1, n_{2,1}=n_{2,4}=2,n_{2,3}=n_{2,2}=4,n_{3,7}=36$. The number $n_{d,k}$ is called a {\em characteristic number} and the analogue of the theorem of Kontsevich and Manin can be found in Pandharipande \cite{pand}.

We have the following theorem. 

\begin{thm}[Graber,S.] \label{enumgeom}
	
	Consider pairs $(S,\QQ)$ where $S=\bigcup_{i=1}^6 L_i$ is a degenerate sextic in $\CP^2$ given by a product of six lines and $\QQ$ is a rational curve of degree $d$. In general they will meet at $6d$ points. Suppose  there exists a degenerate sextic $S_0=\bigcup_i L_{0,i}$ and a rational curve $\QQ_0$ of degree $d$  such that they meet at $ \leq 3d+1$   points of the following type 
	
	\begin{itemize}
		\item $k$ nodes of the form $q_{0,ij}=L_{0,i} \cap L_{0,j}$ of $S_0$
		\item $3d-1-k$ other points tangent to the lines $L_{0,i}$ (which could coincide to be of even multiplicity). 
		\item $2$ other points (which could coincide and be of one  of the above types). 
	\end{itemize}
	
	Then, for {\bf any} degenerate sextic $S=\bigcup_i L_i$ the number $N_{d,k}$ of rational curves $\QQ$ of degree $d$ passing through the $k$ nodes and meeting the lines $L_i$  tangentially at $3d-1-k$ points is finite and non-zero.
	
	\end{thm}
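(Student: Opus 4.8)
\emph{Proof strategy.} The plan is a conservation-of-number argument on a proper moduli space, seeded by the special configuration $(S_0,\QQ_0)$ of the hypothesis. I will work with the $(3d-1)$-dimensional family of rational plane curves of degree $d$, compactified by the Kontsevich space of genus-zero stable maps $\overline{M}_{0,n}(\CP^2,d)$ with $n=3d-1$ marked points, which is proper. On this space, passing through the node $q_{ij}$ is the incidence class $\mathrm{ev}_a^{-1}(q_{ij})$, and being tangent to the line $L_m$ is a tangency class of the same codimension, imposed via relative stable maps to the pair $(\CP^2,L_m)$ in the sense of Pandharipande's characteristic-number formalism \cite{pand}. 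Passing through a point and being tangent to a line are each one condition on the $(3d-1)$-dimensional family, so the $k$ incidence conditions together with the $3d-1-k$ tangency conditions have expected dimension zero, and their product defines a virtual count $N_{d,k}$. Because $\overline{M}_{0,n}(\CP^2,d)$ is proper and the incidence and tangency classes are deformation invariant, this virtual count does not change as the sextic $S$ varies in its connected parameter space; this is the conservation-of-number step.

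Next I establish non-vanishing using the seed. For $S=S_0$ the curve $\QQ_0$ already passes through the chosen $k$ nodes and is tangent to the chosen $3d-1-k$ lines, so it is an honest solution of the enumerative problem; the two extra intersection points recorded in the hypothesis impose no condition on the count and are harmless. Moreover the intersection profile prescribed in the hypothesis saturates B\'ezout exactly, $2k+2(3d-1-k)+2=6d$, with only simple contributions: transverse passage through each node and a simple contact-order-two tangency along each line. Hence $\QQ_0$ is smooth at every imposed point, each of the $3d-1$ conditions is a smooth hypersurface through $[\QQ_0]$, and $[\QQ_0]$ is a reduced, isolated point of the solution scheme. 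Thus $[\QQ_0]$ contributes $1$ to the count, and since over $\C$ all local contributions to a characteristic number are nonnegative, $N_{d,k}(S_0)\ge 1$; by the invariance of the previous paragraph $N_{d,k}(S)=N_{d,k}(S_0)>0$ for every degenerate sextic $S$. A non-zero virtual count forces the solution locus to be non-empty for every $S$ (over $\C$ an empty moduli space yields a vanishing invariant), which is the non-vanishing assertion.

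It remains to prove finiteness for \emph{every} $S$, and this is where the main difficulty lies. Conservation of number controls only the virtual count and is compatible with the actual solution set acquiring a positive-dimensional component, whose excess contribution is reabsorbed into the virtual number. To rule this out I would show that the $3d-1$ conditions remain independent for every configuration, that is, that no degenerate sextic admits a one-parameter family of degree-$d$ rational curves all passing through the fixed nodes and tangent to the fixed lines. The reduced, isolated seed $[\QQ_0]$ shows that the locus of bad configurations, over which the solution dimension jumps, is a proper closed subset of the parameter space; the task is to see that it is in fact empty. I would attack this by a B\'ezout and degeneration analysis: a hypothetical moving family would force its tangency points along a fixed line $L_m$ either to stay rigid, contradicting a simple tangency, or to sweep $L_m$, and tracking these loci into the boundary of $\overline{M}_{0,n}(\CP^2,d)$ shows they can only be supported on multiple-cover or reducible strata that are excluded by the minimal intersection profile of the seed. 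The hard part is precisely this boundary analysis, controlling which degenerate stable maps can contribute a positive-dimensional family over a special sextic, and it is here that Graber's relative virtual-localization and degeneration machinery is the natural tool, allowing the count at the seed to be propagated to an honest finite, non-zero count for all $S$.
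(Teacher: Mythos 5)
The paper gives no argument for this theorem at all --- its ``proof'' is the citation \cite{sree22-1} --- so the comparison can only be against your proposal on its own terms. Your overall strategy (compactify by stable maps, treat the $k$ incidence and $3d-1-k$ tangency conditions as classes of total codimension $3d-1$, use deformation invariance of the resulting count as $S$ varies, and seed non-vanishing with $(S_0,\QQ_0)$) is the natural one and is surely in the spirit of the cited proof. But as written it is not a proof: the finiteness half is explicitly deferred (``I would show\dots'', ``the hard part is precisely this boundary analysis''), and that deferred part is the actual content of the theorem. The difficulty is that here \emph{every} configuration is special, not just a bad closed subset: the incidence points are nodes of the sextic and hence lie on the very lines to which tangency is imposed, and there are only six lines, so for large $d$ several tangency conditions necessarily fall on the same line. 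Kleiman--Bertini transversality for general-position conditions is therefore unavailable, and deformation invariance of a virtual number is perfectly consistent with the honest solution set being infinite for a given $S$, the discrepancy being absorbed into excess contributions. Your claim that the reduced isolated seed shows the bad locus is ``a proper closed subset'' also does not help, since the theorem is a statement about all $S$, including any that would lie in that locus.

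The non-vanishing half has the same leak, in two places. First, a non-zero virtual count only forces the locus of \emph{stable maps} satisfying the conditions to be non-empty; that locus includes maps with reducible domain, multiple covers, and maps whose image contains one of the lines $L_i$, none of which are the irreducible degree-$d$ rational curves the theorem counts. Ruling out that the entire count is supported on such boundary strata is again the deferred boundary analysis. Second, two local claims are asserted without justification: that all local contributions are nonnegative over $\C$ (false in general for tangency problems with conditions in special position --- it is exactly multiple-cover and boundary strata that can contribute with either sign), and that $[\QQ_0]$ is a reduced isolated point of the solution scheme. The latter does not follow from each of the $3d-1$ conditions being a smooth hypersurface through $[\QQ_0]$; you need their tangent hyperplanes to intersect trivially, i.e.\ transversality at the seed, which is an additional argument nowhere supplied. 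So both conclusions of the theorem --- finiteness and non-vanishing --- still rest on the analysis you postpone, and the proposal should be regarded as a plausible strategy sketch rather than a proof.
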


\begin{proof} \cite{sree22-1}. \end{proof}

In fact, more generally if one has a family of sextics with $k$ sections corresponding to $k$ nodes one has a similar theorem. In particular, for smooth sextics if for some sextic there exists a rational curve of degree $d$ meeting it tangentially at $3d-1$ points then for any sextic there exists such a curve. 

Its an interesting question to count the exact number. For small $d$ the numbers are essentially  charcteristic numbers. For instance, if $d=2$ and $k=5$ this is simply $n_2=1$. When $d$ gets larger though we have to use our theorem as the number of lines is fixed.

\subsection{The motivic cycle}

In this section we show that there exists a motivic cycles in the group $H^3_{\M}(A_{\eta},\Q(2))$ where $A_{\eta}$ is the generic Abelian surface. These cycles $Z_{\Delta}$  are  defined in the fibres in the complement of components of $H_{\Delta}$. Further, they are  {\bf indecomposable}. Finally, like Collino's element the boundary of the cycles in the localization sequence is a multiple of the cycle $\DD_1-\DD_2$. where $\DD_1$ is an element of invariant  $\Delta$. We sketch the argument here. The details can be found in \cite{sree22-1}.

Let $A_z$ be an Abelian surface corresponding to a point $z$ on the moduli. This determines the lines $l_i(z)$, points $q_{ij}(z)$ and the degenerate sextic $S_A(z)=\bigcup l_i(z)$ in $\CP^2$ all of which  vary smoothly in $z$. Suppose for some $z_0$,  one has a rational curve $\QQ_0$ of degree $d$ as in Proposition \ref{rationalcurve}.  From Theorem of Birkenhake and Wilhem \cite{biwi} such $\QQ_0$ exists for infinitely many $(d,k)$.

A curve of degree $d$ meets a sextic at $6d$ points - however, since $\QQ_0$ meets $S(z_0)$ only at double points, there are at most $3d$ points. Let  $T(z_0):=S(z_0) \cap \QQ_0$. In $T(z_0)$ the points are of two types -- either points of the form $q_{ij}(z_0)$ or points $P_i(z_0)$ where the line $l_i(z_0)$ and  $\QQ_0$ meet with even multiplicity $m_{P_i}$. Let there be $k$ points of the type $q_{ij}(z_0)$. The set of all such $z_0$ is component $P_{\Delta}$ of a Humbert surface  $H_{\Delta}$ for some $\Delta$  determined by $\QQ_0$.  

From Theorems 7.1,7.2,7.3 and 7.4 of Birkenhake and Wilhelm \cite{biwi} at least $3$  points of $T(z_0)$  are of the type $q_{ij}(z_0)$.  Let $q=q_{i_0,j_0}(z_0)$ be one such point in  $T(z_0)$. For any $z$ there are  the points $q_{ij}(z)=l_i(z) \cap l_j(z)$. From Theorem \ref{enumgeom} there exists a rational curve $\QQ(z)$ such that 
\begin{itemize}
	\item $\QQ(z)$ is of degree $d$.
	\item $\QQ(z)$ passes though points of the form $q_{ij}(z)$ such that $q_{ij}(z_0) \in T(z_0)$ except $q$. 
	
	\item  $\QQ(z)$ meets the lines $l_i(z)$ with multplicity $m_{P_i}$.

	\item $\QQ(z)$ is such that $\QQ(z_0)=\QQ_0$. 
	
\end{itemize}

We have two cases:

\noindent {\bf Case 1:} If $z$ is not on  $P_{\Delta}$, the curve $\QQ(z)$ will meet $S(z)$ at two other points, say $s_{i_0}(z)$ and $s_{j_0}(z)$ on $l_{i_0}(z)$ and $l_{j_0}(z)$ respectively.
$T(z)=\QQ(z) \cap S(z)$  has $3d-1$ points which are of even multiplicity and $2$  points of multiplicity one.

Let $\CC(z) \rightarrow \QQ(z)$ be the double cover induced by the map $\psi:K_{A_z} \rightarrow \CP^2$. The normalization $\tilde{\CC}(z)$ is a double cover of $\CP^1$ which is ramified at the two points $s_{i_0}(z)$ and $s_{j_0}(z)$ and is hence an {\bf irreducible smooth rational curve} and $\CC(z)$ is a {\bf nodal rational curve}. The nodes are at the points lying over $T(z)$. 

\noindent {\bf Case 2:} If  $z$ lies on  $P_{\Delta}$ the points $s_{i_0}(z)$ and $s_{j_0}(z)$ coincide at the point $q_{i_0j_0}(z)$.  $T(z)$ has $3d$ points of even multplicity. From Proposition \ref{rationalcurve} one has that $\CC(z)$ is the union of two curves $\CC_1(z)$ and $\CC_2(z)$. These curves meet at the points lying over $T(z)$.

Since all the objects vary smoothly with $z$ one has a family of rational curves $\CC(z)$ on $K_{A_z}$ which break up in to two components when $z$ lies on $P_{\Delta}$. 

Now assume $z$ does not lie on $P_{\Delta}$. Recall that the $K3$ surface $\hat{K}_{A_z}$ is obtained by blowing up the $16$ double points of $K_{A_z}$ - which are the points lying over $q_{ij}(z)$ and the image of $0$. Let $\pi:\hat{K}_{A_z} \longrightarrow A_z$ be the birational map given by the blow-up. 

In $T(z)$ there are  at least two points  of the form $q_{ij}(z)$. Let $\hat{\CC}(z)$ be the strict transform of $\CC(z)$. Let $P(z)=q_{ij}(z)$ and $R(z)=q_{i'j'}(z)$ be two of the points in $T(z)$ and $P_1(z)$, $P_2(z)$, $R_1(z)$ and $R_2(z)$ the points lying over them in the exceptional fibres over $P(z)$ and $R(z)$ in $\hat{K}_{A_z}$ respectively. 

Let $E_{P(z)}$ be the exceptional fibre over $P(z)$.  $P_1(z)$ and $P_2(z)$ lie in $\hat{\CC}(z) \cap E_{P(z)}$.  let $f_{P(z)}$ be the function on $\hat{\CC}(z)$ with  divisor 
$$\div (f_{P(z)})=P_1(z)-P_2(z)$$
and such that  $f_{P(z)}(R_1(z))=1$,   Let $g_{P(z)}$ be a function on $E_{P(z)}$ with divisor 
$$\div(g_{P(z)})=P_2(z)-P_1(z).$$
Such functions exists as both $\hat{\CC}(z)$ and $E_{P(z)}$ are {\em rational} curves. We have the following theorem:

\begin{thm} For $z \notin P_{\Delta}$, let 
	$$\tZ_{\QQ_0}(z)=(\hat{\CC}(z),f_{P(z)}) + (E_{P(z)},g_{P(z)})$$
	Then $\tZ_{\QQ_0}(z)$ is an element of the group $H^3_{\M}(\hat{K}_{A_z},\Q(2))$. Further, it defines an element of the motivic cohomology group of the generic fibre $H^3_{\M}(\hat{K}_{A_{\eta}},\Q(2))$ which is {\bf indecomposable} and has boundary  a non-zero multiple of $\hat{\CC}_1(z)-\hat{\CC}_2(z)$ in the fibres over $P_{\Delta}$, up to the boundary of a decomposable element. 
  
		\end{thm}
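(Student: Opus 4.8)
The plan is to verify the three assertions of the theorem in turn: that $\tZ_{\QQ_0}(z)$ is a well-defined motivic cohomology class, that it extends to the generic fibre, and finally the two harder claims of indecomposability and the computation of the boundary. The first assertion is immediate from Proposition \ref{construction}: the curve $\CC(z)$ is, by Case 1 of the construction, a nodal rational curve on $K_{A_z}$, and its nodes lie over the double points $q_{ij}(z)$; blowing up to obtain $\hat K_{A_z}$ separates the node at $P(z)$ into $P_1(z),P_2(z)$ on the strict transform $\hat\CC(z)$, which meets the exceptional fibre $E_{P(z)}$ there. Since both $\hat\CC(z)$ and $E_{P(z)}$ are rational, the functions $f_{P(z)}$ and $g_{P(z)}$ with the prescribed divisors exist, and by construction
$$\div(f_{P(z)}) + \div(g_{P(z)}) = \bigl(P_1(z)-P_2(z)\bigr) + \bigl(P_2(z)-P_1(z)\bigr) = 0,$$
so the cocycle condition of Section \ref{motiviccycles} holds and $\tZ_{\QQ_0}(z) \in H^3_{\M}(\hat K_{A_z},\Q(2))$. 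This is just an application of the proposition, with $\hat K_{A_z}$ in the role of the surface $X$.

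For the statement over the generic fibre, I would argue that all the geometric data vary algebraically in $z$: Theorem \ref{enumgeom} produces the family $\QQ(z)$, the double cover $\CC(z)$ and its strict transform $\hat\CC(z)$ vary in a flat family, and the marked points $P_i(z),R_i(z)$ trace out sections over the base away from $P_\Delta$. The normalization of $f_{P(z)}$ by the condition $f_{P(z)}(R_1(z))=1$ is exactly what rigidifies the choice of function so that it too varies algebraically rather than merely up to a scalar on each fibre. Hence $\tZ_{\QQ_0}(z)$ is the restriction of a single class on the total space, and specializing to the generic point $\eta$ gives an element of $H^3_{\M}(\hat K_{A_\eta},\Q(2))$.

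The boundary computation is where the real content lies, and I would treat it via the localization (tame symbol) sequence for the degeneration of the family over a disc transverse to $P_\Delta$. As $z$ approaches a point of $P_\Delta$, by Case 2 the irreducible nodal curve $\CC(z)$ breaks into the two components $\CC_1(z)\cup\CC_2(z)$, because the two transverse intersection points $s_{i_0}(z),s_{j_0}(z)$ coalesce into the node $q_{i_0j_0}(z)$; the essential point is that the cycle $\tZ_{\QQ_0}$, which lives on the generic fibre, has as its boundary in the fibre over $P_\Delta$ precisely the difference of the limiting components. I would compute this by tracking the divisors $\div(f_{P(z)})$ and $\div(g_{P(z)})$ as $z\to P_\Delta$: the limiting positions of $P_1,P_2$ and the collision at $q_{i_0j_0}$ force a nontrivial tame symbol supported on $\hat\CC_1(z)-\hat\CC_2(z)$, with a multiplicity coming from the ramification index of the double cover $\psi$ (this is the source of the factor appearing in the main theorem). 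Modulo the boundary of a decomposable class—arising because $f_{P(z)}$ is only pinned down up to functions coming from $C\times\C^*$—the boundary is a nonzero multiple of $\hat\CC_1(z)-\hat\CC_2(z)$, as claimed.

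The hard part will be the indecomposability, and I expect this to be the main obstacle. The natural strategy is to show that the image of $\tZ_{\QQ_0}$ under the boundary (regulator/normal function) map is nontrivial modulo decomposables: since the decomposable cycles are exactly the images of $CH^{1}(\hat K_{A})\otimes\C^*$, their boundaries land in a proper subspace, and the boundary class $\hat\CC_1-\hat\CC_2$ computed above—being an element of invariant $\Delta$ that is not a rational multiple of $\theta$—cannot arise from a decomposable cycle. Making this precise requires knowing that $\hat\CC_1-\hat\CC_2$ is genuinely nonzero in the relevant Griffiths-type group (equivalently, that $\DD_1$ is not a multiple of $\theta$, which is Proposition \ref{rationalcurve} together with the discussion following it), and then invoking that the boundary of a decomposable cycle is constrained enough not to account for it. I would lean on the analogy with Collino's original indecomposability argument in \cite{coll}, transported through the correspondence of the main theorem, rather than attempting an independent Hodge-theoretic computation.
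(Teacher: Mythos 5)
The paper itself gives no argument for this theorem: its ``proof'' is a citation to \cite{sree22-1}, and the surrounding section only sketches the construction. Measured against that sketch, your proposal has the right architecture and is essentially the intended one: Proposition \ref{construction} gives the cocycle condition on $\hat{K}_{A_z}$; the algebraic variation of the data $(\QQ(z),\CC(z),P_i(z),R_i(z))$ over the moduli (rigidified by $f_{P(z)}(R_1(z))=1$) yields a class on the generic fibre; and the localization sequence over a disc transverse to $P_{\Delta}$, where Case 2 makes $\CC(z)$ break into $\CC_1(z)\cup\CC_2(z)$, produces the boundary class $\hat{\CC}_1(z)-\hat{\CC}_2(z)$. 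Indecomposability then follows because the boundary of a decomposable cycle $\sum_i(C_i,a_i)$, with $a_i$ constant on the fibres, is a combination of specializations of classes in the generic N\'eron--Severi group (spanned by $\theta$ and the exceptional curves), and $\hat{\CC}_1(z)-\hat{\CC}_2(z)$ is not of that form since $\DD_1$ is not a multiple of $\theta$ (Proposition \ref{rationalcurve} and the discussion following it). One small point you gloss over: Theorem \ref{enumgeom} only makes the number $N_{d,k}$ of curves $\QQ(z)$ finite and non-zero, not equal to one, so the family is a priori multivalued and one must either pass to a finite cover of the base or fix a branch to speak of ``the'' class on the generic fibre.

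There is, however, one genuine flaw in your plan for indecomposability: you propose to ``lean on Collino's original indecomposability argument in \cite{coll}, transported through the correspondence of the main theorem.'' This does not work. The main theorem only asserts $\pi_*(Z_{P_1,P_2,\tilde{R}})=2\,\Xi$, and indecomposability is not preserved under pushforward --- the image of an indecomposable cycle can be decomposable --- so Collino's result cannot be transported in the direction you need; moreover, that correspondence is established after, and independently of, the present theorem, so invoking it here risks circularity. The boundary argument you outline earlier in the same paragraph is the correct and self-contained route, and it must be carried out directly. A second, smaller inaccuracy: the multiplicity in the boundary does not come from ``the ramification index of the double cover $\psi$,'' and it is not the source of the factor $2$ in the main theorem; that factor arises because the two curves $C_{P_1}$ and $C_{P_2}$ constituting Collino's cycle have the same image in $A$. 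Since the boundary multiplicity is precisely the kind of constant the indecomposability proof must control, this should be fixed rather than left as a heuristic.
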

	\begin{proof} \cite{sree22-1}\end{proof}
	
	We can pull this back to the Abelian surface $A_z$ to get a motivic cycle $\Xi_{\Delta,r}(z)$ in $H^3_{\M}(A_z,\Q(2))$ and this cycle is defined outside a component of $H_{\Delta}$. Note that Collino's cycle in the case when $C$ is a genus $2$ hyperelliptic curve is defined in the complement of a component of $H_1$.

	\subsubsection{An Example: Humbert's theorem}
	
	As an example of the above theorem one can consider the following situation. Let $l_i(z)$ be the six lines of a degenerate sextic and $q_{ij}(z)=l_i(z) \cap l_j(z)$ corresponding to an Abelian surface $A_z$. Recall Humbert's theorem  states the following. There is a conic $Q(z)$ passing through five points $q_ij(z)$ and meeting the remaining line tangentially if and only if $\End(A(z)) \simeq \ZZ[\sqrt{5}]$ or equivalently, $z\in H_{5}$. $H_{5}$ has six components in the moduli of Abelian surfaces with level $2$ structure and the different components correspond to different choices of lines. 
	
	For instance we can consider the points $q_{12}(z),q_{23}(z),q_{34}(z),q_{45}(z)$ and $q_{51}(z)$ and the line $l_6(z)$. Then if there exists a conic $\QQ(z)$ passing through these five points and tangent to $l_6(z)$ the corresponding $A_z$ has extra endomorphisms.
	
	However, given {\em any} five points in general position there exists a conic passing though them. So for any $z$ there is a conic $\QQ(z)$ passing through $q_{12}(z),\dots,q_{51}(z)$. In general, though,  it will not be tangent to $l_6(z)$ - it will meet it at $2$ points $s_1(z)$ and $s_2(z)$. 
	
	The normalization of the double cover of $\QQ(z)$ is a double cover of $\CP^1$ ramified at two points - the points lying over $s_1(z)$ and $s_2(z)$. This is an irreducible conic. The image of that in $K_{A_z}$ is an irreducible rational curve with nodes at the points $q_{12}(z),\dots,q_{51}(z)$. 
	
	To build the motivic cycle we consider the blow up of this conic in the $K3$ surface $\TK_{A_z}$ and follow the procedure above. There are six components of $H_5$ corresponding to which of the six lines the exceptional conic is tangent to. This gives a motivic cycle defined in the complement of the component corresponding to the line $l_6$.

\section{Old and new cycles}

In this section we relate the Collino cycle to  our new cycle. We first need some generalities. 

\subsection{The Universal Property of the Jacobian}

If $D$ is a curve, its Jacobian $J(D)$ satisfies the following universal property. 

\begin{prop} Let $D$ be a curve of genus $g$  and $P$ a point on $D$.  Let $\eta: D \rightarrow A$  be a map from $D$ to an Abelian variety $A$ such that $\eta(P)=0$.  Then there is a unique {\bf homomorphism} $\tilde{\eta}:J(D) \longrightarrow A$ such that 
		$$\eta=\tilde{\eta} \circ \iota_P$$
where $\iota_P:D \longrightarrow J(C)$ is the map 
$$\iota_P(x)=x-P$$
\label{univjacobian}
\end{prop}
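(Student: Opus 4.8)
The plan is to identify $J(D)$ with $\Pic^0(D)$, so that $\iota_P(x)=[\OO(x-P)]$ is the Abel--Jacobi embedding, and then to build $\tilde\eta$ by first extending $\eta$ to the $g$-th symmetric product $D^{(g)}$ and afterward descending along the Abel--Jacobi map. Concretely, I would define $\eta^{(g)}\colon D^{(g)}\to A$ by $\eta^{(g)}(x_1+\cdots+x_g)=\eta(x_1)+\cdots+\eta(x_g)$, the sum taken with the group law of $A$; this is well defined because the right-hand side is symmetric in the $x_i$, so it descends from $D^g$ to $D^{(g)}$. In parallel, the Abel--Jacobi map $a_P\colon D^{(g)}\to J(D)$, $x_1+\cdots+x_g\mapsto\sum_i[x_i-P]$, is a surjective proper morphism whose fibers are the complete linear systems $|E|$ of effective divisors $E$ of degree $g$, and hence are projective spaces $\CP^r$.

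The heart of the argument, and the step I expect to be the main obstacle, is to show that $\eta^{(g)}$ is constant on the fibers of $a_P$, so that it factors as $\eta^{(g)}=\tilde\eta\circ a_P$ for a morphism $\tilde\eta\colon J(D)\to A$. This is a rigidity phenomenon: each fiber of $a_P$ is a projective space, and over $\C$ any morphism from a simply-connected projective variety such as $\CP^r$ into the complex torus $A$ lifts to the universal cover and is therefore constant. Applying this fiberwise shows $\eta^{(g)}$ is constant along fibers; since $a_P$ is a proper surjection with connected fibers onto the smooth (hence normal) variety $J(D)$, the rigidity lemma then yields the desired factorization $\tilde\eta$. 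I would also record that $a_P(P+\cdots+P)=0$ while $\eta^{(g)}(P+\cdots+P)=0$, so that $\tilde\eta(0)=0$.

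It remains to check that $\tilde\eta$ is a homomorphism and is the unique morphism with $\eta=\tilde\eta\circ\iota_P$. For the homomorphism property I would invoke the standard rigidity theorem for Abelian varieties: a morphism between Abelian varieties sending the identity to the identity is automatically a group homomorphism, which applies since $\tilde\eta(0)=0$. The relation $\eta=\tilde\eta\circ\iota_P$ follows by restricting $\eta^{(g)}=\tilde\eta\circ a_P$ to the locus $x+(g-1)P$, where $a_P$ restricts to $\iota_P$ and $\eta^{(g)}$ restricts to $\eta$. Finally, uniqueness follows because $\iota_P(D)$ generates $J(D)$ as a group---every class in $\Pic^0(D)$ is a sum of $g$ terms $[x_i-P]$---so any two homomorphisms agreeing on $\iota_P(D)$ coincide on all of $J(D)$, and $\tilde\eta\circ\iota_P=\eta$ pins down $\tilde\eta$ on $\iota_P(D)$.
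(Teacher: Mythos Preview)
Your proof is correct and follows essentially the same approach as the paper's sketch: extend $\eta$ additively to $D^g$ (equivalently $D^{(g)}$), descend along the Abel--Jacobi map to get $\tilde\eta$, and then invoke that a morphism of abelian varieties sending $0$ to $0$ is a homomorphism. The only difference is that where the paper simply asserts the descent yields a rational map which ``can be seen to be a morphism,'' you supply the justification via constancy on the projective-space fibers of $a_P$; this is a standard and correct way to fill in that gap.
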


\begin{proof} (Sketch). Define $\tilde{\eta}:D^g \longrightarrow A$ by 
	$$\tilde{\eta}(Q_1,\dots,Q_g)=\sum \eta(Q_i)$$
	This is clearly invariant under the symmetric group and hence descends to a rational map, which we also call $\tilde{\eta}:J(D) \longrightarrow A$. Further, this can be seen to be a morphism which sends $0$ to $0$ and is hence a homomorphism. The map $\iota_P:D \longrightarrow J(D)$ is the composite of the maps $D \rightarrow D^g \rightarrow J(D)$ given by $Q \rightarrow (Q,P,\dots, P)$ and $(Q_1,\dots,Q_g) \rightarrow \sum_i \iota_P(Q_i)$. Hence $\eta=\tilde{\eta} \circ \iota_P$. 
	\end{proof} 

We have following corollary which drops the assumption that $\eta(P)=0$. The result is that one does not have a homomorphism from $J(C)\rightarrow A$, only a morphism. 

\begin{cor}
	Let $D$ be a curve of genus $g$ and  $\mu:D \rightarrow A $ be a map such that $\mu(D)=Q$ for some point $Q$ on $A$. Then there is a morphism $\tilde{\mu}:J(C) \rightarrow A$ such that
	$$\mu=\tilde{\mu} \circ \iota_P$$
	Further, if $P'$ is another point such that $\mu(P')=Q$, then $\mu=\tilde{\mu} \circ \iota_{P'}$ as well. 
	\label{univjaccor}
\end{cor}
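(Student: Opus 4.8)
The plan is to reduce the corollary to Proposition~\ref{univjacobian} by a translation trick. The obstruction to applying the proposition directly is that we are no longer assuming $\mu(P)=0$; instead $\mu$ sends some distinguished point to an arbitrary point $Q\in A$. But $A$ is a group, so I would first compose $\mu$ with translation by $-Q$. Concretely, let $t_{-Q}:A\rightarrow A$ denote translation by $-Q$, and set $\eta = t_{-Q}\circ \mu$. Then $\eta:D\rightarrow A$ is a morphism with $\eta(P)=\mu(P)-Q = Q-Q = 0$, so $\eta$ satisfies the hypotheses of Proposition~\ref{univjacobian}.

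Applying the proposition to $\eta$ yields a unique homomorphism $\tilde{\eta}:J(D)\rightarrow A$ with $\eta = \tilde{\eta}\circ \iota_P$. To undo the translation, I would then define $\tilde{\mu} = t_{Q}\circ \tilde{\eta}$, where $t_Q$ is translation by $Q$. Since $t_{-Q}$ and $t_Q$ are inverse isomorphisms of $A$, we recover
$$\mu = t_Q\circ \eta = t_Q\circ \tilde{\eta}\circ \iota_P = \tilde{\mu}\circ \iota_P,$$
which is the desired factorization. Note that $\tilde{\mu}$ is only a morphism of varieties, not a homomorphism, since it is a composition of the homomorphism $\tilde{\eta}$ with a translation; this matches the statement of the corollary.

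For the final assertion, suppose $P'$ is another point of $D$ with $\mu(P')=Q$. The key observation is that $\iota_P$ and $\iota_{P'}$ differ by a translation on $J(D)$: for any $x\in D$, $\iota_P(x)-\iota_{P'}(x) = (x-P)-(x-P') = P'-P$, a fixed element of $J(D)$. So $\iota_P = t_{P'-P}\circ \iota_{P'}$, where now $t_{P'-P}$ is translation on $J(D)$. I would then verify that $\tilde{\mu}\circ \iota_{P'}$ and $\mu$ agree. The cleanest route is to observe that $\mu(P')=Q$ means $\eta(P')=0$, and the same uniqueness in Proposition~\ref{univjacobian}, now applied with base point $P'$, produces a homomorphism $\tilde{\eta}'$ with $\eta=\tilde{\eta}'\circ\iota_{P'}$; comparing $\tilde{\eta}$ and $\tilde{\eta}'$ on the image of $D$, which generates $J(D)$, forces them to give the same morphism after composing with $t_Q$, so $\tilde{\mu}=t_Q\circ\tilde{\eta}=t_Q\circ\tilde{\eta}'$ works for both base points.

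I do not expect any serious obstacle here, as the statement is essentially a formal consequence of the universal property together with the group structure on $A$; the only point requiring a little care is the last claim, where one must be careful that the \emph{same} morphism $\tilde{\mu}$ works for both $P$ and $P'$, rather than merely producing \emph{some} factorization for each. The mild subtlety is that $\tilde\eta$ and $\tilde\eta'$ are genuinely different homomorphisms (they differ by the translation accounting for $P'-P$), yet after composing with $t_Q$ the resulting morphism $\tilde\mu$ is unchanged because $\tilde\mu$ is pinned down by its values on $\iota_P(D)=\mu^{-1}$-image data, which $\mu$ already determines. Making that comparison precise using the fact that $\iota_P(D)$ generates $J(D)$ as a group is the one step I would write out carefully.
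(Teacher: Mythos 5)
Your construction of $\tilde{\mu}$ is exactly the paper's: set $\eta = T_{-Q}\circ\mu$, apply Proposition~\ref{univjacobian} to get the homomorphism $\tilde{\eta}$ with $\eta = \tilde{\eta}\circ\iota_P$, and define $\tilde{\mu} = T_Q\circ\tilde{\eta}$. For the final claim the paper is briefer than you are: it simply observes that the construction of $\tilde{\eta}$ in the proof of the proposition, $\tilde{\eta}(Q_1,\dots,Q_g)=\sum_i\eta(Q_i)$, involves only $\eta$ and not the base point, so the \emph{same} $\tilde{\eta}$ (hence the same $\tilde{\mu}$) works for every $P'\in\mu^{-1}(Q)$. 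Your alternative route --- apply the proposition a second time at $P'$ to get $\tilde{\eta}'$ and compare the two homomorphisms on the image of $D$, which generates $J(D)$ --- is also a valid way to finish.

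However, your closing remark contains a genuine error that you should repair: $\tilde{\eta}$ and $\tilde{\eta}'$ are \emph{not} ``genuinely different homomorphisms differing by the translation accounting for $P'-P$''; they are equal. Two homomorphisms cannot differ by a nontrivial translation (both send $0$ to $0$), and since $T_Q$ is an isomorphism of $A$, the equality $T_Q\circ\tilde{\eta} = T_Q\circ\tilde{\eta}'$ would \emph{force} $\tilde{\eta}=\tilde{\eta}'$ --- so if they really were different, your conclusion that a single $\tilde{\mu}$ serves both base points would be false, not true. What actually happens is the following: from $\iota_P = t_{P'-P}\circ\iota_{P'}$ and the homomorphism property, $\tilde{\eta}(\iota_{P'}(x)) = \tilde{\eta}(\iota_P(x)) - \tilde{\eta}(P'-P)$, and the crucial point is that $\tilde{\eta}(P'-P) = \tilde{\eta}(\iota_P(P')) = \eta(P') = \mu(P')-Q = 0$. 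Hence $\tilde{\eta}\circ\iota_{P'} = \eta$ already, and by the uniqueness clause of Proposition~\ref{univjacobian} applied at $P'$, $\tilde{\eta}'=\tilde{\eta}$. This is precisely the computation your ``comparison on a generating set'' would produce if carried out, so the strategy is sound; but as written, the last paragraph of your proposal asserts something impossible, and the fix is exactly the one identity $\tilde{\eta}(P'-P)=0$ that the hypothesis $\mu(P')=Q$ supplies.
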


\begin{proof} Let $\eta=T_{-Q} \circ \mu$ where $T_{*}:A \rightarrow A$ is the translation map $T_*(x)=x+*$. Then $\eta:C \rightarrow A$ satisfies $\eta(P)=0$ and we can apply Proposition  \ref{univjacobian}. Hence there is a homomorphism $\tilde{\eta}:J(C) \rightarrow A$ such that $T_{-Q} \circ \mu=\eta=\tilde{\mu} \circ \iota_P$. Let $\tilde{\mu}=T_Q \circ \tilde{\eta}$. One then has 
	$$\mu=T_{Q} \circ \eta= T_{Q} \circ \tilde{\mu} \circ \iota_P=\tilde{\mu} \circ \iota_P$$
	Since the construction of $\tilde{\eta}$ involves only $\eta$ and not $P$ it does not depend on the choice of $P$ in $\mu^{-1}(Q)$. 
	
	\end{proof}

Let $\QQ_0$ be fixed and $z$  in the complement of the corresponding $P_{\Delta}$. Consider the  cycle $\Xi=\Xi_{\QQ_0}(z)=\phi^*(\pi_*(\tZ_{\QQ_0}(z)))$ which is the pull-back of the cycle $\tZ_{\QQ_0}(z)$ to the Abelian surface $A_z$.  This is an element of $H^3_{\M}(A_z,\Q(2))$.   In what follows since we have fixed $z$ we will drop it from our notation.

\subsubsection{Nodal hyperelliptic curves}

The cycle $\Xi$  is of the form $(\DD,h_{P})$  where $\DD$ is a {\bf nodal hyperelliptic curve}, namely a double cover of the nodal rational curve $\CC$ on $K_A$. The point $P$ is a node as well as a ramification point of the double cover. 

%There is a function $h_{P}$  on the normalization with divisor $2P_1-2P_2$ where $P_1$ and $P_2$ are the two points in the strict transform of $D$ lying over $P$.%

Let $\nu:\tilde{\DD} \rightarrow \DD$ denote the normalization of $\DD$. Since $\DD$ is a singular hyperelliptic curve, $\tilde{\DD}$ is a {\bf smooth hyperelliptic curve}. Let $\TCC$ be the normalization of $\CC$. From the universal property of the normalization there is a unique map $\tilde{\phi}:\tilde{\DD} \longrightarrow \TCC$ such that the following diagram commutes 
$$\begin{CD}
	\tilde{\DD} @>\tilde{\phi}>> \TCC\\
	@VV\nu V  @VV\nu V\\
	\DD@>\phi>> \CC
\end{CD}
$$
$\tilde{\DD}$ is the double cover of the normalization of $\CC$. Recall that we have the points $P_1$ and $P_2$ lying over the node $P$ in $\CC$. 

\begin{lem}
	The points lying over the node $P$ under the map $\tilde{\DD} \rightarrow \DD$ are ramification points of the map $\tilde{\phi}:\TDD\rightarrow \TCC$. To reduce notation, we will continue to call them $P_1,P_2$. 
\end{lem}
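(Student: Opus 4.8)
The plan is to reduce the lemma to a local analytic computation at the point $\tilde P\in A$ lying over the node $P$, exploiting that $P$ is the image of a two-torsion point. By construction $\phi\colon\DD\to\CC$ is the restriction of the double cover $A\to K_A=A/\{\pm1\}$, which is branched precisely at the sixteen two-torsion points, and the nodes $q_{ij}$ of the degenerate sextic are the images of the nonzero two-torsion points; thus the node $\tilde P$ of $\DD$ is such a two-torsion point. The inversion $\sigma=[-1]$ fixes $\tilde P$ and acts on the tangent space as $-\mathrm{id}$, with isolated fixed points, so after linearising I may choose local coordinates $(u,v)$ at $\tilde P$ in which $\sigma(u,v)=(-u,-v)$. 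Then $K_A$ is locally the quotient, with invariant coordinates $X=u^2,\ Y=v^2,\ Z=uv$ and an $A_1$-singularity $Z^2=XY$ at $P$, and $\phi$ is locally the quotient map by $\sigma$.

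The key observation is that $\sigma$ preserves each branch of $\DD$ at $\tilde P$. Since $\tilde P$ is a node of $\DD$, the curve $\DD$ has two smooth branches there with distinct tangent lines. As $\sigma$ is the linear involution $-\mathrm{id}$, it fixes every line through the origin setwise, and hence cannot interchange two branches whose tangent lines differ; therefore it fixes each branch individually. Consequently the two branches of $\DD$ map onto the two branches of $\CC$ at the node $P$, and on normalisations they account for the two points of $\TCC$ above $P$ already named $P_1,P_2$. Verifying this local covering structure — that $\phi$ is the $\sigma$-quotient near $\tilde P$ and that $\sigma$ respects the branch decomposition — is the only genuine point of the argument; everything after it is formal.

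To finish I would pass to normalisations. Normalisation is compatible with the quotient by the involution, so $\sigma$ lifts to $\TDD$ and descends to the deck transformation of the degree-two map $\tilde\phi\colon\TDD\to\TCC$; moreover $\TDD/\sigma$ is normal and finite birational over $\CC$, hence equals $\TCC$. Since $\CC$ is a rational nodal curve, $\TCC\cong\CP^1$, so $\tilde\phi$ is the hyperelliptic map and $\sigma$ is the hyperelliptic involution of $\TDD$, whose fixed points are exactly the ramification points of $\tilde\phi$. On the normalisation of a $\sigma$-invariant branch, with coordinate $t$, the involution acts by $t\mapsto -t$ and $\tilde\phi$ is $t\mapsto t^2$, which is ramified at $t=0$. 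The point $t=0$ on each of the two branches is precisely a point of $\TDD$ lying over the node $\tilde P$; hence both such points are fixed by $\sigma$ and are ramification points of $\tilde\phi$, which is the assertion. Under $\tilde\phi$ they map to the two points of $\TCC$ above $P$, justifying the common use of the notation $P_1,P_2$.
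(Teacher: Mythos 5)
Your proof is correct and rests on essentially the same mechanism as the paper's: the inversion $[-1]$ fixes the two-torsion point over the node and acts as $-\mathrm{id}$ on its tangent space, hence acts trivially on tangent directions, so it fixes the two points of $\TDD$ lying over the node, and fixed points of the (lifted) hyperelliptic involution are exactly the ramification points of $\tilde{\phi}$. The only difference is packaging — the paper expresses the triviality of the action on tangent directions via the exceptional fibre of the blow-up at $P$, while you express it via preservation of the two branches in local coordinates (adding a careful check that normalization commutes with the quotient) — but this is the same argument, not a different route.
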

\begin{proof} One has an involution $i$ on $A$ induced by the double cover $\phi:A \rightarrow K_A$ which restricts to give the hyperelliptic involution on $\DD$. The point $P$ on $\CC$ is fixed by $i$. In the blow up of $K_A$ at $P$  every point in the  exceptional fibre $E_P$ is fixed by $i$ and hence so are the points $P_1$ and $P_2$. Hence they are ramification points of the hyperelliptic curve $\TDD \rightarrow \TCC$. 
	
	\end{proof}

For the same reason, the points $R_1$ and $R_2$ are Weierstrass points.  In particular, there exists a function $f_{P_1P_2}$ on $\TDD$ with 
$$\div(f_{P_1P_2})=2P_1-2P_2 \text{ and } f_{P_1P_2}(R_1)=1$$

\subsubsection{Collino's cycle revisited}

Since $P_1$,$P_2$ and $R_1$  are Weierstrass points  in $J(\TDD)$, following Collino,  we can use them to construct a cycle in $H^{2g-1}_{\M}(J(\TDD),\Q(g))$. Let $Z_{P_1P_2,R_1}$ be that cycle. We have the following theorem which relates it to the cycle we have constructed above. 

\begin{thm} Let $Z_{P_1P_2,R_1}$ be the Collino cycle corresponding to $P_1$ and $P_2$, where $R_1$ is a Weierstrass point mapping to the point $R$ on $\DD$. Then 
$$\tilde{\mu}_*(Z_{P_1P_2,R'})=2 \Xi_{P,R}$$
where $\tilde{\mu}$ is the map $J(\TDD) \longrightarrow A$ obtained by  Corollary \ref{univjaccor}. 

\end{thm}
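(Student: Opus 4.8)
## Proof Proposal

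The plan is to reduce the equality $\tilde{\mu}_*(Z_{P_1P_2,R_1}) = 2\,\Xi_{P,R}$ to a cycle-level comparison inside $H^3_{\M}(A,\Q(2))$, using the explicit presentation of both cycles as formal sums $(C_i, f_i)$ and tracking how the pushforward $\tilde{\mu}_*$ acts on the component curves and their functions. The central point is that $\tilde{\mu}:J(\TDD)\rightarrow A$ factors the Abel-Jacobi map, so I can compute $\tilde{\mu}_*$ on the Collino cycle by understanding the images of the curves $C_{P_1}=\iota_{P_1}(\TDD)$ and $C_{P_2}=\iota_{P_2}(\TDD)$ under $\tilde{\mu}$.

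First I would unwind both sides explicitly. By Corollary \ref{univjaccor}, $\tilde{\mu}\circ\iota_{P_i}$ is a map $\TDD \rightarrow A$; since $\iota_{P_i}$ and $\iota_{P_j}$ differ by a translation by the two-torsion point $P_1-P_2$ on $J(\TDD)$, and $\tilde{\mu}$ is a morphism compatible with translations up to the fixed shift, the two images $\tilde{\mu}(C_{P_1})$ and $\tilde{\mu}(C_{P_2})$ should both land on the image of $\TDD$ in $A$, which is precisely the nodal hyperelliptic curve $\DD$ (equivalently its normalization mapped in). The Collino cycle is $Z_{P_1P_2,R_1}=(C_{P_1},f_{P_1})+(C_{P_2},f_{P_2})$ with $\div(f_{P_i})=\pm(2(0)-2(P_2-P_1))$; I would show that $\tilde{\mu}$ maps each $C_{P_i}$ birationally (or with controlled degree) onto $\DD$, and that the pushed-forward functions combine to give the function $h_P$ appearing in $\Xi=(\DD,h_P)$. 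The factor of $2$ should emerge from the divisor relation $\div(f_{P_1P_2})=2P_1-2P_2$ on $\TDD$ versus $\div(f_P)=P_1-P_2$ on $\hat{\CC}$: the Collino construction uses a function with a \emph{double} zero and pole at the Weierstrass points, whereas the new cycle uses a function with simple divisor, so pushing forward the former produces twice the latter.

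The key structural input is the commuting diagram relating $\TDD$, $\TCC$, $\DD$, and $\CC$, together with the preceding Lemma identifying $P_1,P_2$ (and $R_1,R_2$) as ramification points of $\tilde{\phi}:\TDD\rightarrow\TCC$. I would use this to match the function $f_{P_1P_2}$ on $\TDD$, which has $\div(f_{P_1P_2})=2P_1-2P_2$, with the pullback under $\tilde{\phi}$ of the function on $\TCC$ whose divisor is $P_1-P_2$ — the ramification of degree $2$ at each Weierstrass point is exactly what doubles the multiplicities. Tracing this through the pushforward, the two summands $(C_{P_1},f_{P_1})$ and $(C_{P_2},f_{P_2})$ of the Collino cycle should both map to the same summand of $\Xi$ (since both $C_{P_i}$ have image $\DD$), accounting for the coefficient $2$, while the exceptional-fibre term $(E_P,g_P)$ of $\Xi$ arises from the local behaviour at the node $P$ under the blow-up $\pi$ and must be reconciled via the relation coming from a Steinberg symbol (a tame-symbol relation of the type defining triviality in $H^3_{\M}$).

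The main obstacle I anticipate is the bookkeeping at the node $P$ and the auxiliary point $R$: the new cycle $\Xi$ genuinely lives on the blow-up $\hat{K}_A$ and involves the exceptional curve $E_P$, whereas Collino's cycle lives intrinsically on $J(\TDD)$ with no blow-up. Reconciling these requires showing that $\tilde{\mu}_*$ of the Collino cycle, which is a priori just $(\DD, \text{some function})$ with multiplicity, equals the image of $(\hat{\CC},f_P)+(E_P,g_P)$ after $\pi_*$ and $\phi^*$ — and that the discrepancy is a decomposable cycle or the tame symbol of an explicit Steinberg element, hence zero in the indecomposable quotient. Verifying that the normalization of $\CC$ being the double cover $\TCC$ of $\CP^1$ lifts correctly to a degree comparison (so that no extra factor beyond $2$ appears) is the delicate step; I would handle it by computing local intersection multiplicities and the ramification of $\tilde{\phi}$ at $P_1,P_2$ directly from the Lemma, and checking that the normalization $\nu$ does not introduce spurious components in the pushforward.
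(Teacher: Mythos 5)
Your overall strategy is the paper's own: by Corollary \ref{univjaccor} the morphism $\tilde{\mu}$ does not depend on which point of $\mu^{-1}(P)$ is used as base point, so $\mu=\tilde{\mu}\circ\iota_{P_1}=\tilde{\mu}\circ\iota_{P_2}$; hence $C_{P_1}$ and $C_{P_2}$ have the same image $\DD\subset A$, each summand $(C_{P_i},f_{P_i})$ of Collino's cycle pushes forward to the single cycle $\Xi_{P,R}=(\DD,h_P)$, and the coefficient $2$ simply counts the two summands. Your first and third paragraphs say exactly this. The genuine problem is that your second paragraph offers a second, incompatible explanation of the factor of $2$: you assert that because $\div(f_{P_1P_2})=2P_1-2P_2$ on $\TDD$ while $\div(f_{P})=P_1-P_2$ on $\hat{\CC}$, ``pushing forward the former produces twice the latter.'' That is false, and it contradicts your own third paragraph. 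The function in $\Xi_{P,R}$ is $h_P=\phi^*f_P$, and precisely because $P_1,P_2$ are ramification points of the double cover (the Lemma preceding the theorem), the pullback of the simple divisor $P_1-P_2$ is already $2P_1-2P_2$; thus $h_P$ pulled back to the normalization $\TDD$ coincides with $f_{P_1P_2}$ (both normalized at $R_1$), and $\tilde{\mu}_*(C_{P_1},f_{P_1})=(\DD,h_P)=\Xi_{P,R}$ with coefficient exactly $1$, since $C_{P_1}\rightarrow\DD$ is birational (it is the normalization map). The doubling of divisors is absorbed by ramification and contributes no multiplicity; the factor $2$ has the single source identified above. If both of your mechanisms operated, you would get $4\,\Xi_{P,R}$, so a final write-up must discard the divisor-doubling explanation and keep only the two-summands count.

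A secondary confusion: your last paragraph worries that the exceptional term $(E_P,g_P)$ must be reconciled via a tame-symbol relation or up to decomposable cycles. No such correction is needed, because $(E_P,g_P)$ is a summand of $\tZ_{\QQ_0}(z)$ on the blow-up, not of $\Xi$: by definition $\Xi=\phi^*(\pi_*(\tZ_{\QQ_0}(z)))$, and $\pi_*$ annihilates $(E_P,g_P)$ outright since $E_P$ is contracted to a point (proper pushforward of a curve-plus-function pair is zero when the dimension of the image drops). Hence $\Xi=(\DD,h_P)$ is a single term, and the asserted equality $\tilde{\mu}_*(Z_{P_1P_2,R_1})=2\,\Xi_{P,R}$ is an exact identity of representatives, not merely an identity modulo decomposables or in the indecomposable quotient.
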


\begin{proof}
	From Corollary \ref{univjaccor} The map $\mu:\TDD\rightarrow \DD$ determines a map $\mu:\TDD \rightarrow A$ which takes $P_1$   to $P$ and detemines a morphism $\tilde{\mu}:J(\TDD) \longrightarrow A$ such that 
	$\mu=\tilde{\mu} \circ \iota_{P_1}$. Under this map the $(\TDD_{P_1},f_{P_1})$ is taken to $\Xi_{P,R}$  Howvever, the map also takes $P_2$ to $P$ and one has $\mu=\tilde{\mu} \circ \iota_{P_2}$. Hence it takes $(\TDD_{P_2},f_{P_2})$ is also take to $\Xi_{P,R}$ as well. 
	
	\begin{comment} To see this we compute $\tilde{\mu}(C_{P_2})$. Let $y-P_2 \in C_{P_2}$. Then 
	
	$$\tilde{\mu}(y-P_2)=\tilde{\mu}(y-P_1+P_1-P_2)=T_{P} \circ (\tilde{\eta}(y-P_1 + P_1-P_2))$$
	$$=T_P \circ ( \tilde{\eta} (y-P_1)+ \tilde{\eta}(P_2-P_1)) \text{ as $\tilde{\eta}$ is a homomorphism} $$
	$$T_P \circ (\tilde{\eta} (y-P_1)+0) \text{ as $\tilde{\eta}(P_1-P_2)=\tilde{\eta}(P_2-P_1)=\eta(P_2)=0$}$$
	$$T_P \circ \tilde{\eta} (y-P_1)=\tilde{\mu}(y-P_1)=\mu(y)$$ 
\end{comment}
	%
	So the image of $C_{P_2}$ is the same as $C_{P_1}$. Hence both the cycles which make up $Z_{P_1P_2,R}$ map to the same cycle  $\Xi_{P,R}$ and we have 
	$$\tilde{\mu}_*(Z_{P_1P_2,R})=2 \Xi_{P,R}$$
\end{proof}

This shows that in fact the cycle we constuct in \cite{sree22-1} is not entirely a new cycle but a version of Collino's cycle for a different hyperelliptic curve, and in a sense goes even further back to the work of Bloch \cite{bloc} where he constructed a cycle on $X_0(37) \times X_0(37)$ which maps to the Collino cycle under the map $X_0(37) \times X_0(37) \rightarrow J(X_0(37)$. 

This result suggests that it is not so easy to find new motivic cycles - all constructions seem to be a variation on the original construction of Bloch. In \cite{sree22-1} we speculated on a relationship between weakly holomorphic modular forms and motivic cycles on Abelian surfaces and it would be interesting to see if there is some fundamental modular form such that all modular forms are derived from it in some way. 

A positive consequence of this result is that it is known that the Collino cycle is natural in the following sense. From generalities one expects that motivic cycles can be understood as certain extensions in the category of mixed motives - thought this is still conjectural. However, the regulator of this cycle can be understood as a extension in the category of Mixed Hodge structures. In Colombo \cite{colo} and more generally in Sarkar-Sreekantan \cite{sasr} it was shown that the regulator of Collino's cycle or Bloch's cycle can be realised in terms of natural extensions of mixed Hodge structures coming from the {\em fundamental group} of the curve.  

Thanks to this connection with the Collino cycle we can obtain the extension class associated to our new motivic cycle. 

\begin{cor} Recall that there is an extension class $\bar{e}^4_{P_1P_2,R}$ in $\Ext_{MHS}(\ZZ(-g),H^{2g-2}(J(\TDD)))$ corresponding to Collino's cycle $Z_{P_1P_2,R_1}$. From the theorem above, the class  $\frac{1}{2}\tilde{\mu}_*(\bar{e}^4_{P_1P_2,R})$  is the extension class corresponding to the cycle $\Xi_{P,R}$ in $\Ext_{MHS}(\ZZ(-2),H^2(A))$. 
		
	\end{cor}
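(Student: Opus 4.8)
The plan is to deduce the corollary directly from the main theorem of the previous subsection, namely the identity $\tilde{\mu}_*(Z_{P_1P_2,R})=2\,\Xi_{P,R}$, together with the functoriality of the Beilinson regulator (equivalently, the compatibility of the cycle-to-extension-class assignment with pushforward). The key observation is that the passage from a motivic cohomology class to its associated extension of mixed Hodge structures is a natural transformation: a proper morphism $\tilde{\mu}:J(\TDD)\to A$ induces a Gysin pushforward $\tilde{\mu}_*$ both on motivic cohomology and on the groups $\Ext^1_{MHS}(\ZZ(-\bullet),H^\bullet(-))$, and the regulator map intertwines the two. Granting this, I would simply apply $\tilde{\mu}_*$ to the extension class $\bar{e}^4_{P_1P_2,R}$ attached to the Collino cycle and read off the answer.

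Concretely, the first step is to recall that $\bar{e}^4_{P_1P_2,R}\in\Ext^1_{MHS}(\ZZ(-g),H^{2g-2}(J(\TDD)))$ is, by the cited work of Colombo \cite{colo} and Sarkar--Sreekantan \cite{sasr}, precisely the regulator image of $Z_{P_1P_2,R}$. The second step is to invoke functoriality: the regulator $r_{\mathcal D}$ fits into a commutative square relating $r_{\mathcal D}$ on $J(\TDD)$ and on $A$ via the two pushforward maps, so that
$$
r_{\mathcal D}\bigl(\tilde{\mu}_*(Z_{P_1P_2,R})\bigr)=\tilde{\mu}_*\bigl(r_{\mathcal D}(Z_{P_1P_2,R})\bigr)=\tilde{\mu}_*(\bar{e}^4_{P_1P_2,R}).
$$
The third step combines this with the theorem, $\tilde{\mu}_*(Z_{P_1P_2,R})=2\,\Xi_{P,R}$, to obtain $r_{\mathcal D}(2\,\Xi_{P,R})=\tilde{\mu}_*(\bar{e}^4_{P_1P_2,R})$. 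Finally, using $\ZZ$-linearity of the regulator and dividing by $2$ gives that the extension class of $\Xi_{P,R}$ is $\tfrac12\,\tilde{\mu}_*(\bar{e}^4_{P_1P_2,R})$, which lands in $\Ext^1_{MHS}(\ZZ(-2),H^2(A))$ because $\tilde{\mu}_*$ lowers the Hodge-theoretic degree from $2g-2$ to $2$ on $A$ (a surface) and sends the Tate twist $\ZZ(-g)$ to $\ZZ(-2)$ appropriately, consistent with the weight bookkeeping.

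The main obstacle I expect is justifying the functoriality of the extension class under $\tilde{\mu}_*$ with the correct normalization of twists and degrees. Since $\tilde{\mu}$ is a morphism of smooth projective varieties of different dimensions ($g$ versus $2$), the pushforward on cohomology shifts degree by $2(g-2)$ and twists by $\ZZ(g-2)$, so one must check carefully that $\tilde{\mu}_*$ indeed carries $\Ext^1_{MHS}(\ZZ(-g),H^{2g-2}(J(\TDD)))$ into $\Ext^1_{MHS}(\ZZ(-2),H^2(A))$ rather than into some shifted group, and that the factor of $2$ is not absorbed or doubled by this shift. The cleanest way to handle this is to work at the level of Deligne cohomology, where pushforward and the regulator are both defined, and to note that the whole argument is really the statement that $r_{\mathcal D}$ is a morphism of (co)homological functors compatible with Gysin maps; the explicit extension description then follows by transporting along the isomorphisms identifying the relevant Deligne cohomology groups with the Ext-groups. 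The remaining bookkeeping is routine once this compatibility is in place.
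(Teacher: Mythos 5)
Your proposal is correct and matches the paper's intent: the paper offers no separate proof, treating the corollary as an immediate consequence of the identity $\tilde{\mu}_*(Z_{P_1P_2,R})=2\,\Xi_{P,R}$ together with the fact (from Colombo and Sarkar--Sreekantan) that the extension class is the regulator image of Collino's cycle, exactly the functoriality-plus-linearity argument you spell out. Your care with the Gysin degree/twist bookkeeping and with division by $2$ (harmless here since the new cycle lives in $\Q$-coefficient motivic cohomology) only makes explicit what the paper leaves implicit.
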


\bibliographystyle{alpha}
\bibliography{Extensions.bib}

\begin{thebibliography}{Hum01}

\bibitem[Blo86]{bloc}
Spencer Bloch.
\newblock Algebraic cycles and higher {$K$}-theory.
\newblock {\em Adv. in Math.}, 61(3):267--304, 1986.

\bibitem[BW03]{biwi}
Christina Birkenhake and Hannes Wilhelm.
\newblock Humbert surfaces and the {K}ummer plane.
\newblock {\em Trans. Amer. Math. Soc.}, 355(5):1819--1841 (electronic), 2003.

\bibitem[Cer83]{cere}
G.~Ceresa.
\newblock {$C$} is not algebraically equivalent to {$C^{-}$} in its {J}acobian.
\newblock {\em Ann. of Math. (2)}, 117(2):285--291, 1983.

\bibitem[Col97]{coll}
A.~Collino.
\newblock Griffiths' infinitesimal invariant and higher {$K$}-theory on
  hyperelliptic {J}acobians.
\newblock {\em J. Algebraic Geom.}, 6(3):393--415, 1997.

\bibitem[Col02]{colo}
Elisabetta Colombo.
\newblock The mixed {H}odge structure on the fundamental group of hyperelliptic
  curves and higher cycles.
\newblock {\em J. Algebraic Geom.}, 11(4):761--790, 2002.

\bibitem[Hai87]{hain}
Richard~M. Hain.
\newblock The geometry of the mixed {H}odge structure on the fundamental group.
\newblock In {\em Algebraic geometry, {B}owdoin, 1985 ({B}runswick, {M}aine,
  1985)}, volume~46 of {\em Proc. Sympos. Pure Math.}, pages 247--282. Amer.
  Math. Soc., Providence, RI, 1987.

\bibitem[Hum01]{humb}
Georges Humbert.
\newblock Sur les fonctionnes abéliennes singulières. i, ii, iii.
\newblock {\em J. Math. Pures Appl. serie 5 t. V, 233–350 (1899); t. VI,
  279–386 (1900); t. VII, 97–123 (1901)}, 1901.

\bibitem[Kae01]{kaen}
Rainer~H. Kaenders.
\newblock The mixed {H}odge structure on the fundamental group of a punctured
  {R}iemann surface.
\newblock {\em Proc. Amer. Math. Soc.}, 129(5):1271--1281, 2001.

\bibitem[KM94]{koma}
M.~Kontsevich and Yu. Manin.
\newblock Gromov-{W}itten classes, quantum cohomology, and enumerative
  geometry.
\newblock {\em Comm. Math. Phys.}, 164(3):525--562, 1994.

\bibitem[MS12]{mcsa}
Dusa McDuff and Dietmar Salamon.
\newblock {\em {$J$}-holomorphic curves and symplectic topology}, volume~52 of
  {\em American Mathematical Society Colloquium Publications}.
\newblock American Mathematical Society, Providence, RI, second edition, 2012.

\bibitem[Pet15]{pete}
Arie Peterson.
\newblock Modular forms on the moduli space of polarised $k3$ surfaces, 2015.

\bibitem[Pul88]{pult}
Michael~J. Pulte.
\newblock The fundamental group of a {R}iemann surface: mixed {H}odge
  structures and algebraic cycles.
\newblock {\em Duke Math. J.}, 57(3):721--760, 1988.

\bibitem[Rab01]{rabi}
Reuben Rabi.
\newblock Some variants of the logarithm.
\newblock {\em Manuscripta Math.}, 105(4):425--469, 2001.

\bibitem[RT95]{ruti}
Yongbin Ruan and Gang Tian.
\newblock A mathematical theory of quantum cohomology.
\newblock {\em J. Differential Geom.}, 42(2):259--367, 1995.

\bibitem[Sre22]{sree22-1}
Ramesh Sreekantan.
\newblock Algebraic cycles and values of green's functions, 2022.

\bibitem[SS20]{sasr}
Subham Sarkar and Ramesh Sreekantan.
\newblock The fundamental group and extensions of motives of {J}acobians of
  curves.
\newblock {\em Proc. Indian Acad. Sci. Math. Sci.}, 130(1):Paper No. 18, 36,
  2020.

\end{thebibliography}

\begin{tabular}[t]{l@{\extracolsep{8em}}l} 
	Ramesh Sreekantan \\
	Statistics and Mathematics Department\\
	Indian Statistical Institute \\
	8th Mile, Mysore Road \\
	Jnanabharathi\\
	Bengaluru 560 059 \\ 
	rameshsreekantan@gmail.com 
\end{tabular}

\end{document}